\newcommand{\Q}{\mathbb{Q}}
\newcommand{\Z}{\mathbb{Z}}
\renewcommand{\Im}{\mathrm{Im}\,}
\def\Frac{\displaystyle\frac}
\DeclareMathOperator{\Ker}{Ker}
\newtheorem{theorem}{Theorem}[section]
\newtheorem{lemma}[theorem]{Lemma}
\theoremstyle{definition}
\newtheorem{definition}[theorem]{Definition}
\theoremstyle{remark}
\numberwithin{equation}{section}
\begin{document}

\title{A Fox-Milnor Theorem for Knots in a Thickened Surface}

\author{James Kreinbihl}
\address{Department of Mathematics and Computer Science, Wesleyan University, Middletown, Connecticut 06459}
\curraddr{Department of Mathematics,
Trinity College, Hartford, Connecticut 06106}
\email{james.kreinbihl@trincoll.edu}


\subjclass[2010]{Primary 57M25; Secondary 57M27}

\keywords{Knots in a thickened surface, concordance, Alexander Polynomial, Fox-Milnor Theorem}

\begin{abstract}
A knot in a thickened surface $K$ is a smooth embedding $K:S^1 \rightarrow \Sigma \times [0,1]$, where $\Sigma$ is a closed, connected, orientable surface. There is a bijective correspondence between knots in $S^2 \times [0,1]$ and knots in $S^3$, so one can view the study of knots in thickened surfaces as an extension of classical knot theory. An immediate question is if other classical definitions, concepts, and results extend or generalize to the study of knots in a thickened surface. One such famous result is the Fox Milnor Theorem, which relates the Alexander polynomials of concordant knots. We prove a Fox Milnor Theorem for concordant knots in a thickened surface by using Milnor torsion.
\end{abstract}

\maketitle



\section{Introduction}

A \textit{knot in a thickened surface} $K$ is a smooth embedding $K: S^1 \rightarrow \Sigma \times I$ where $\Sigma$ is a closed, connected, orientable surface. An example, called the virtual trefoil, is depicted below.
\begin{figure}[h]\begin{centering}
\includegraphics[scale=.15]{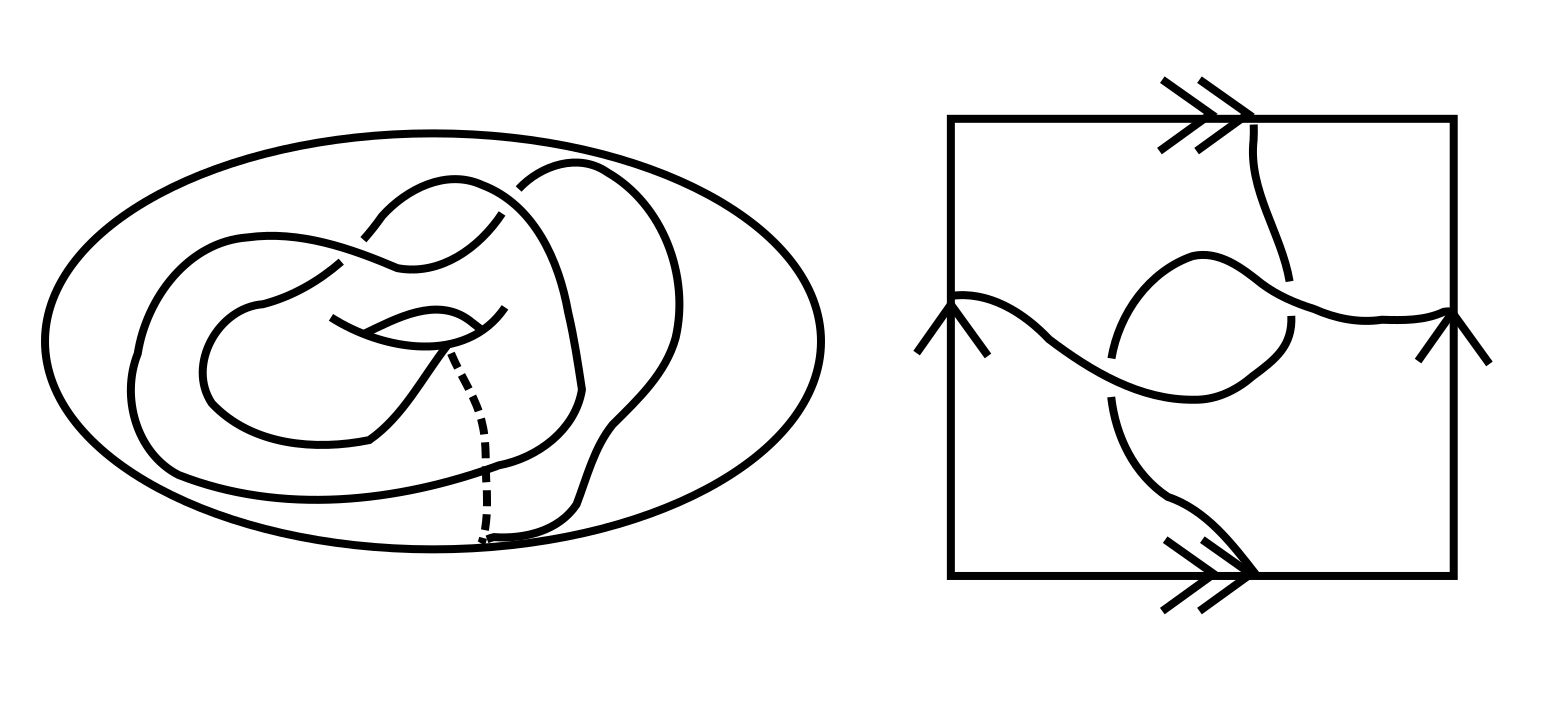}
\caption{A diagram of the virtual trefoil}
\label{vt}
\end{centering}
\end{figure}
\noindent Two knots in a thickened surface $K_0, K_1$ are \textit{equivalent} if there exists an orientation-preserving diffeomorphism 
\[
f: \left(\Sigma \times I, \Sigma \times \left\{0\right\}\right)\rightarrow \left(\Sigma \times I, \Sigma \times \left\{0\right\}\right)
\]
such that $f \circ K_0 = K_1$. A knot in a thickened surface is trivial if it bounds a smoothly embedded disk in $\Sigma \times I$. It is important to note that this definition does not allow for stablization or destabilization, the addition or reduction of genus to the surface $\Sigma$. There is a bijective correspondence between knots in $S^2 \times I$ and knots in $S^3$, so we can interpret the study of knots in a thickened surface as a generalization of classical knot theory. As such, one hopes that many of the operations and results of classical knot theory can be extended to the study of knots in a thickened surface. One such famous result is the Fox-Milnor Theorem which relates the Alexander polynomials of concordant knots. 

The goal of this paper is to prove a Fox Milnor Theorem for concordant knots in a thickened surface. First we summarize and reinterpret a definition of an Alexander polynomial of a knot in a thickened surface, proposed by Carter, Silver and Williams in \cite{LTS}. This discussion leads to a slightly altered definition of an Alexander polynomial. Then, we provide a definition of concordance of knots in a thickened surface which is more restrictive than the definition of virtual concordance proposed in previous work \cite{stableequiv, Cobord}. The proof of our main result follows a strategy similar to the proof of the Fox Milnor Theorem. Thus, we investigate the concordance complement and its boundary. We then relate the Alexander polynomial of a knot in a thickened surface to the Milnor torsion of a certain pair of spaces. Lastly, we prove our Fox Milnor Theorem by using results about Milnor torsion \cite{Turaev} and a duality theorem for Milnor torsion. 

\section{An Alexander Polynomial for Knots in a Thickened Surface}

In \cite{LTS}, Carter, Silver, and Williams propose a definition for an Alexander polynomial of a knot in a thickened surface. We briefly outline their construction. Let $\Sigma$ be a closed, connected, orientable surface with $g\left(\Sigma\right) \geq 1$ and let $\Gamma = \pi_1\left(\Sigma\right)$. 
The covering space of $\Sigma$ corresponding to the trivial subgroup of $\Gamma$ is the universal cover of $\Sigma$, which we denote $p: \widetilde{\Sigma} \rightarrow \Sigma$. The group of deck transformations of $\widetilde{\Sigma}$ is isomorphic to $\Gamma$. Since $p: \widetilde{\Sigma} \rightarrow \Sigma$ and $\text{id}:I \rightarrow I$ are both covering maps, it follows that the product map $p \times \text{id}: \widetilde{\Sigma} \times I \rightarrow \Sigma \times I$ is also a covering map. We will use the symbol $p$ to refer to this product covering map. A deck transformation $h: \widetilde{\Sigma} \rightarrow \widetilde{\Sigma}$ can be extended to a deck transformation $h \times \text{id} :\widetilde{\Sigma} \times I \rightarrow \widetilde{\Sigma} \times I$. Conversely, a deck transformation $f : \widetilde{\Sigma} \times I \rightarrow \widetilde{\Sigma} \times I$ can be restricted to a deck transformation $f:\widetilde{\Sigma} \rightarrow \widetilde{\Sigma}$. Since the map $\pi_1\left(\Sigma \right) \rightarrow \pi_1\left(\Sigma \times I\right)$ induced by inclusion is an isomorphism, it follows that the group of deck transformations of $\Sigma \times I$ is isomorphic to $\Gamma$. Let $K$ be a knot in the thickened surface $\Sigma \times I$ and let $\widetilde{K}=p^{-1}(K)$. Let $X= \Sigma \times I \backslash K$ denote the knot complement and let $\widetilde{X}=\widetilde{\Sigma} \times I \backslash \widetilde{K}$. Since $X$ is a subspace of $\Sigma \times I$ and $p^{-1}\left(X\right)=\widetilde{X}$, it follows that the restriction of $p:\widetilde{\Sigma} \times I \rightarrow \Sigma \times I$ to $p:\widetilde{X} \rightarrow X$ is a covering map. A deck transformation $h: \widetilde{\Sigma} \times I \rightarrow \widetilde{\Sigma} \times I$ can be restricted to deck transformation $h: \widetilde{X} \rightarrow \widetilde{X}$. Thus, the group of deck transformations of $\widetilde{X}$ is still isomorphic to $\Gamma$. The covering group of the knot is $\widetilde{\pi_K}= \pi_1\left(\widetilde{X}\right)$. In this discussion, we choose and fix a base point in $\widetilde{\Sigma} \times \{1\} \subset \widetilde{X}$. The covering group is a knot invariant. If $K_1$ and $K_2$ are equivalent knots in a thickened surface $\Sigma \times I$,then there exists a homeomorphism $f: \left(\Sigma \times I, \Sigma \left\{0\right\}\right)\rightarrow \left(\Sigma \times I, \Sigma \left\{0\right\}\right)$ such that $f \circ K_1 = K_2$. The map $f$ lifts to
\[
\xymatrix{
  \widetilde{\Sigma} \times I \ar[d]^{p} \ar[r]^{\widetilde{f}}  &   \widetilde{\Sigma} \times I \ar[d]^{p} \\
  \Sigma \times I \ar[r]^{f}  & \Sigma \times I
}
\]
Note that 
\[
\widetilde{f}\left(\widetilde{K_1}\right)=\widetilde{f}\left(p^{-1}(K_1)\right)=p^{-1}\left(f(K_1)\right)=p^{-1}(K_2)=\widetilde{K_2}.
\]
Therefore, $\widetilde{f}$ restricts to a homeomorphism $\widetilde{f}:\widetilde{X_1} \rightarrow \widetilde{X_2}$, where $\widetilde{X_i}=\widetilde{\Sigma} \times I \backslash \widetilde{K_i}$ for $i=1,2$. This homeomorphism induces an isomorphism of the covering groups. In their paper, Carter, Silver, and Williams state that one can compute a presentation of the covering group $\widetilde{\pi_K}= \pi_1\left(\widetilde{X}\right)$ using Wirtinger's Algorithm and that the generators of this presentation are meridians of the lift of knot in $\widetilde{\Sigma} \times I$. 

Before discussing the Carter, Silver, Williams definition of an Alexander polynomial, we recall the definition of the classical Alexander polynomial. Let $K$ be a knot in $S^3$ and let $X=S^3-K$. The Alexander module of the knot is $H_1(X;\Z[H_1(X)])$ and the Alexander polynomial of the knot is the order of the Alexander module. We now demonstrate an equivalent definition of the Alexander module and Alexander polynomial. Consider the abelianization homomorphism 
\[
\varphi: \pi_1(X) \rightarrow H_1(X) \cong \langle t \rangle.
\]
The covering space $p:\widetilde{X} \rightarrow X$ corresponding to $\Ker \varphi=\left[\pi_1(X), \pi_1(X)\right]= \pi_1(X)^{(1)}$ of $\pi_1(X)$ is the infinite cyclic cover of the knot complement. Note that the Alexander module $H_1(X;\Z[H_1(X)])$ and $H_1\left(\widetilde{X};\Z\right)$ are isomorphic as abelian groups. The covering map $p$ induces an isomorphism $\pi_1\left(\widetilde{X}\right) \cong p_*\left(\pi_1\left(\widetilde{X}\right)\right) = \Ker \varphi = \pi_1(X)^{(1)}$. Let $\pi_1(X)^{(2)}$ denote $\left[\pi_1(X)^{(1)},\pi_1(X)^{(1)}\right]$. Then, 
\begin{center}
\begin{tikzpicture}[->,>=stealth',auto,node distance=3cm,
  thick,main node/.style={font=\sffamily\Large\bfseries}]

  \node[main node] (1) {$\frac{\pi_1(X)^{(1)}}{\pi_1(X)^{(2)}}$};
  \node[main node] (2) [right of=1] {$\frac{\pi_1\left(\widetilde{X}\right)}{\pi_1\left(\widetilde{X}\right)^{(1)}}$};
  \node[main node] (3) [right of=2] {$H_1\left(\widetilde{X};\Z\right)$};

  \path[every node/.style={font=\sffamily\small}]
    (1) edge node [right] [above]{}(2)
    (2) edge node [right] [above]{}(3)
    (1) edge[bend left] node [right] [above]{$\psi$} (3);
\end{tikzpicture}
\end{center}
the homomorphism $\psi$ is an isomorphism of abelian groups. The isomorphism $\psi$ can be made into a $\Z[t,t^{-1}]$-module isomorphism. Thus, we can think of the Alexander module of a knot as $\Frac{\Ker \varphi}{[\Ker \varphi, \Ker \varphi]}$ with the $\Z[t,t^{1-}]$-module structure and the Alexander polynomial as the order of this module.

We now return to the case of a knot in a thickened surface. The definition of the Alexander polynomial proposed by Carter, Silver, and Williams resembles the second equivalent definition of classical Alexander polynomial. Define $\epsilon : \widetilde{\pi}_K \rightarrow \Z = \langle t \rangle$ to be the homomorphism mapping every meridian of the lift of $K$ to $t$. Let $M=\frac{\Ker \epsilon}{[\Ker \epsilon, \Ker \epsilon]}$ denote the abelianization of $\Ker \epsilon$. The abelian group $M$ can be given a $\Z[\Z \times \Gamma]$-module structure. We want to associate a Noetherian module to each knot so that we may compute elementary ideals of the module. Thus, define $\overline{M}=\frac{M}{M_0}$ where $M_0$ be the submodule of $M$ generated by all elements of the form $m^{\gamma}-m^{\eta}$ where $m \in M$ and $\gamma, \eta \in \Gamma$ such that $\gamma \eta^{-1} \in [\Gamma, \Gamma]$. One may consider $\overline{M}$ as an Alexander module of the knot. The Alexander polynomial is defined to be $\Delta_0(\overline{M})$ which is an element of the ring $\Z\left[\Z \times \frac{\Gamma}{[\Gamma,\Gamma]}\right]$.

This algebraic definition of the Alexander module can be described equivalently using homology with local coefficients. Consider the homomorphism $\epsilon: \pi_1\left(\widetilde{X}\right) \rightarrow \Z = \langle t \rangle$ defined previously. There is a covering space $p':\widetilde{X}' \rightarrow \widetilde{X}$ corresponding to the subgroup $\Ker \epsilon \leq \pi_1\left(\widetilde{X}\right)$ such that $\pi_1\left(\widetilde{X}'\right) \cong p_*'\left(\pi_1\left(\widetilde{X}'\right)\right) = \Ker \epsilon$. Therefore,
\begin{align*}
H_1\left(\widetilde{X}';\Z\right) & \cong \frac{\pi_1\left(\widetilde{X}'\right)}{\left[\pi_1\left(\widetilde{X}'\right),\pi_1\left(\widetilde{X}'\right)\right]}\\ & \cong \frac{p_*'\left(\pi_1\left(\widetilde{X}'\right)\right)}{\left[p_*'\left(\pi_1\left(\widetilde{X}'\right)\right),p_*'\left(\pi_1\left(\widetilde{X}'\right)\right)\right]}\\ &=\frac{\Ker \epsilon}{[\Ker \epsilon,\Ker \epsilon]} = M.
\end{align*}
The group $H_1\left(\widetilde{X}';\Z\right)$ has a $\Z[\Z\times \Gamma]$-module structure. The covering space $\widetilde{X}'$ is equivalent to the covering space of $X$ which corresponds to the kernel of the homomorphism $\phi:\pi_1(X) \rightarrow \Z \times \Gamma$. The spaces $X, \widetilde{X}$ and $\widetilde{X}'$ are related as below.  
\begin{center}
\begin{tikzpicture}[->,>=stealth',auto,node distance=3cm,
  thick,main node/.style={font=\sffamily\Large\bfseries}]

  \node[main node] (1) {$\widetilde{X}'$};
  \node[main node] (2) [right of=1] {$\widetilde{X}$};
  \node[main node] (3) [right of=2] {$X$};

  \path[every node/.style={font=\sffamily\small}]
    (1) edge node [right] [above]{$p'$}(2)
    (2) edge node [right] [above]{$p$}(3)
    (1) edge[bend left] node [right] [above]{$q$} (3);
\end{tikzpicture}
\end{center}
Therefore, $H_1\left(\widetilde{X}';\Z\right)=H_1\left(X;\Z[\Z\times \Gamma]\right)$ has a $\Z[\Z \times \Gamma]$-module structure with coefficient system defined by $\phi:\pi_1(X) \rightarrow \Z \times \Gamma$. In order to have a Noetherian module over the ring $\Z\left[\Z \times \frac{\Gamma}{[\Gamma,\Gamma]}\right]$, we previously considered a quotient module. We can interpret this quotient as a tensor product by using the following result.

\begin{lemma}\label{quotensor}
Let $G$ be a group. If $M$ is a right $\Z G$-module and $H \unlhd G$, then $M \otimes_{\Z G} \Z [\frac{G}{H}] \cong \frac{M}{M_0}$ where $M_0$ is the submodule generated by elements of the form $xr-xs$ for all $x \in M$ and $r^{-1}s \in H$.
\end{lemma}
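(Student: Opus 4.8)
The plan is to prove the isomorphism by showing that both sides satisfy the same universal property, or equivalently by constructing explicit inverse maps and checking they are well-defined $\Z G$-module (indeed $\Z[G/H]$-module) homomorphisms. First I would fix notation: write $\pi: \Z G \to \Z[G/H]$ for the ring homomorphism induced by the quotient $G \to G/H$, so that $\Z[G/H]$ becomes a $\Z G$-bimodule via $\pi$. The submodule $M_0 \subseteq M$ is generated by the elements $xr - xs$ with $x \in M$ and $r^{-1}s \in H$; note $r^{-1}s \in H$ is equivalent to $rH = sH$, i.e. $r$ and $s$ have the same image in $G/H$. I would first record the easy observation that $M_0$ equals the submodule generated by $\{x(h-1) : x \in M,\ h \in H\}$, since $xr - xs = xs(s^{-1}r - 1)$ and $s^{-1}r \in H$ when $r^{-1}s \in H$, and conversely $x(h-1) = xh - x$ is visibly of the required form with $r = h$, $s = 1$.

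Next I would define the map $\alpha: M \otimes_{\Z G} \Z[G/H] \to M/M_0$. The obvious candidate on simple tensors is $\alpha(x \otimes \bar g) = \overline{xg}$, where $\bar g = gH \in G/H$ and $\overline{xg}$ denotes the class of $xg$ in $M/M_0$. To see this is well-defined, I would check it descends from the balanced (middle-linear) map $M \times \Z[G/H] \to M/M_0$; the only subtlety is independence of the choice of coset representative $g$, which is exactly the statement that $\overline{xg} = \overline{xg'}$ whenever $gH = g'H$, and that is precisely the defining relation of $M_0$. In the other direction I would define $\beta: M \to M \otimes_{\Z G} \Z[G/H]$ by $\beta(x) = x \otimes 1$; this is a $\Z G$-module map (using $(x \otimes 1)\cdot g = xg \otimes g^{-1}\cdot\text{nothing}$... more precisely one checks $\beta(xg) = xg \otimes 1 = x \otimes g\cdot 1 = x \otimes \bar g$ via the balancing, and this shows $\beta(x(h-1)) = x \otimes (\bar h - 1) = x \otimes 0 = 0$ since $\bar h = 1$ in $G/H$), so $\beta$ kills $M_0$ and descends to $\bar\beta: M/M_0 \to M \otimes_{\Z G}\Z[G/H]$.

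Finally I would verify $\alpha$ and $\bar\beta$ are mutually inverse: the composite $\bar\beta \circ \alpha$ sends $x \otimes \bar g \mapsto \overline{xg} \mapsto xg \otimes 1 = x \otimes \bar g$, using the balancing relation once more, and the composite $\alpha \circ \bar\beta$ sends $\bar x \mapsto x \otimes 1 \mapsto \bar x$. Since every element of $M \otimes_{\Z G}\Z[G/H]$ is a sum of simple tensors $x \otimes \bar g$ (as $\Z[G/H]$ is generated as an abelian group by the cosets), additivity of both maps finishes the argument. I would also remark that $\alpha$ is automatically a $\Z[G/H]$-module homomorphism for the natural $\Z[G/H]$-structures on both sides, which is what we will actually use when applying the lemma to turn the quotient $\overline{M} = M/M_0$ into a tensor product $M \otimes_{\Z[\Z\times\Gamma]} \Z[\Z \times \Gamma/[\Gamma,\Gamma]]$.

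I do not expect any serious obstacle here; this is a standard base-change/right-exactness fact about tensor products. The one place that deserves care is the well-definedness of $\alpha$: one must be careful that the relation defining $M_0$ (matching cosets $rH = sH$) is exactly what is needed to make $\overline{xg}$ independent of the representative $g$, and that nothing more is being quotiented. Equivalently, one can phrase the whole proof as the observation that $\Z[G/H] \cong \Z G \otimes_{\Z G} \Z[G/H] \cong \Z G / (\Z G)\cdot I_H$ where $I_H$ is the augmentation-type ideal generated by $\{h - 1 : h \in H\}$, and then $M \otimes_{\Z G} \Z[G/H] \cong M/(M \cdot I_H) = M/M_0$; I would probably present the explicit-maps version since it is self-contained and leaves no ambiguity about the module structures.
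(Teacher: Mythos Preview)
Your proof is correct. The paper actually states this lemma without proof (it is used immediately afterward to reinterpret the Carter--Silver--Williams Alexander module as a tensor product), so there is nothing to compare against; your explicit construction of mutually inverse maps $\alpha$ and $\bar\beta$, together with the reduction of $M_0$ to the submodule generated by $x(h-1)$, is the standard and entirely adequate argument. The one cosmetic point: the aside ``$(x\otimes 1)\cdot g = xg \otimes g^{-1}\cdot\text{nothing}$'' is garbled, but you immediately give the correct computation $\beta(xg) = xg\otimes 1 = x\otimes \bar g$, so no harm is done---just delete the false start in a final version.
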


Therefore, the Alexander module, as defined by Carter, Silver, and Williams can equivalently be defined as
\[
\frac{M}{M_0} \cong  H_1\left(X;\Z\left[\Z \times \Gamma \right]\right) \otimes_{\Z[\Z \times \Gamma]} \Z\left[\Z \times \frac{\Gamma}{[\Gamma,\Gamma]}\right].
\] 
Unfortunately, this definition suffers from a base point issue \cite{privcomm}. Recall that a base point in $\widetilde{\Sigma} \times \{1\} \subset \widetilde{X}$ is chosen. But, in their use of Fox calculus computations to compute a presentation matrix of the Alexander module, all 1-chains are treated as 1-cycles. Thus, their computations produce the order of the module
\[
H_1\left(\widetilde{X}',p'^{-1}(w);\Z\right)
\]
where $w$ is a fixed base point in $\widetilde{X}$ \cite{privcomm}. Clearly, the intention of this definition is to treat $\widetilde{X}$, in particular $\widetilde{\Sigma} \times \{1\} \subset \widetilde{X}$, as a connected space with a single 0-cell and to then consider the homology of the infinite cyclic cover of $\widetilde{X}$ as a $\Z[\Z\times \Gamma]$-module. However, in doing so, the action of $\Gamma$ on the Alexander module becomes undefined. In order to have an action of $\Gamma$, one must maintain the cell structure that is lifted from the base space $X=\Sigma \times I \backslash K$. That is, in order to have $\Gamma$ act by deck transformations on $\widetilde{X}=\widetilde{\Sigma} \times I \backslash \widetilde{K}$, one must use the cell structure of $\widetilde{X}$ which has a 0-cell for each element of the group $\Gamma$. With these considerations in mind, we propose the following definition.

\begin{definition}
Let $K$ be a knot in a thickened surface $\Sigma \times I$ and let $X=\Sigma \times I \backslash K$. The \textit{Alexander module of the knot in a thickened surface $K$} is $H_1\left(X,\Sigma \times \left\{0\right\};\Z\left[H_1(X)\right]\right)$.
\end{definition} 

\noindent In \cite{LTS}, it is shown that $H_1(X) \cong \Z \times\frac{\Gamma}{[\Gamma,\Gamma]}$, so this is still a $\Z\left[\Z \times \frac{\Gamma}{[\Gamma,\Gamma]}\right]$-module. Recall that if two knots $K_0, K_1$ in a thickened surface $\Sigma \times I$ are equivalent, then there exists a homeomorphism
\[
f: \left(\Sigma \times I, \Sigma \times \left\{0\right\}\right)\rightarrow \left(\Sigma \times I, \Sigma \times \left\{0\right\}\right)
\]
such that $f \circ K_0 = K_1$. Therefore, $f$ restricts to a homeomorphism  
\[
f: \left(X_0, \Sigma \times \left\{0\right\}\right)\rightarrow \left(X_1, \Sigma \times \left\{0\right\}\right)
\]
where $X_i=\Sigma \times I \backslash K_i$ for $i=0,1$. This homeomorphism induces an isomorphism of the Alexander modules. Since the Alexander module defined above is a knot invariant, we now define an Alexander polynomial of a knot in a thickened surface. 
\begin{definition}
Let $K$ be a knot in a thickened surface $\Sigma \times I$ and let $X=\Sigma \times I \backslash K$. The \textit{Alexander polynomial of the knot in a thickened surface $K$} is 
\[
\Delta(K) = \Delta_0\left(H_1\left(X,\Sigma \times \left\{0\right\};\Z\left[H_1(X)\right]\right)\right).
\]
\end{definition} 
\noindent While this definition of the Alexander polynomial ultimately differs from that of Carter, Silver, and Williams in \cite{LTS}, it is the polynomial that they compute in their paper. Furthermore, the Alexander module we define is still the homology of $\widetilde{X}'$, the infinite cyclic cover of $\widetilde{X}$. That is, we will show that 
\[
H_1(X,\Sigma \times \left\{0\right\};\Z[\Z \times \Gamma]) \otimes_{\Z[\Z \times \Gamma]} \Z\left[\Z \times \frac{\Gamma}{[\Gamma,\Gamma]}\right] \cong H_1\left(X,\Sigma \times \left\{0\right\};\Z\left[\Z \times \frac{\Gamma}{[\Gamma,\Gamma]}\right]\right).
\]
We make use of the following result in our proof.

\begin{theorem}\cite{mccleary}
Let $R$ be a ring, $A_{\ast}$ be a positive complex of flat right $R$-modules, and let $C_{\ast}$ be a positive complex of left $R$-modules. Then,
\[
\text{E}_{p,q}^2= \displaystyle\oplus_{s+t=q} \text{Tor}_p^R(H_s(A_{\ast}),H_t(C_{\ast})) \Rightarrow H_{p+q}(A_{\ast} \otimes_R C_{\ast}).
\]
\end{theorem}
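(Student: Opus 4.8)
The plan is to exhibit the asserted spectral sequence as one of the two spectral sequences of a first–quadrant double complex assembled from $A_\ast$ and a Cartan–Eilenberg resolution of $C_\ast$, with the flatness of $A_\ast$ used at precisely two points: to identify the abutment, and to collapse the $E^1$–page. Concretely, I would fix a Cartan–Eilenberg resolution $\varepsilon\colon P_{\bullet\bullet}\to C_\ast$ of $C_\ast$ by projective left $R$–modules, indexed so that $p$ is the ``chain'' degree (matched with $C_p$) and $q\ge 0$ is the ``resolution'' degree; since $C_\ast$ is positive, such a resolution exists and is supported in $p,q\ge 0$. I would record the two features of it that the argument needs: each column $P_{p,\bullet}$, equipped with the vertical differential $d^{v}$, is a projective resolution of $C_p$; and for each $p$ the horizontal homology $H_p(P_{\bullet\bullet},d^{h})_\bullet$, again with $d^{v}$, is a projective resolution of $H_p(C_\ast)$. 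In particular the horizontal cycles, boundaries, and homologies of $P_{\bullet\bullet}$ are termwise projective, and the augmentation induces a quasi–isomorphism $\mathrm{Tot}(P_{\bullet\bullet})\to C_\ast$ of positive complexes.

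Next I would pass to the double complex $\mathcal D_{\sigma,q}=\bigoplus_{i+p=\sigma}A_i\otimes_R P_{p,q}$, with horizontal differential $d^{A}\otimes 1\pm 1\otimes d^{h}$ and vertical differential $1\otimes d^{v}$; because $A_\ast$ and $C_\ast$ are positive this is a genuine first–quadrant double complex with each $\mathcal D_{\sigma,q}$ a finite sum, so both of its spectral sequences converge. The abutment is identified by the equality $\mathrm{Tot}(\mathcal D)=A_\ast\otimes_R\mathrm{Tot}(P_{\bullet\bullet})$ together with the fact that, $A_\ast$ being a positive complex of flat modules, the functor $A_\ast\otimes_R(-)$ preserves the quasi–isomorphism $\mathrm{Tot}(P_{\bullet\bullet})\to C_\ast$; hence $H_N(\mathrm{Tot}(\mathcal D))\cong H_N(A_\ast\otimes_R C_\ast)$.

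I would then run the spectral sequence of $\mathcal D$ whose $E^1$–term is obtained by first taking homology in the horizontal direction, so that $E^1_{q,\sigma}=H_\sigma\!\big(A_\ast\otimes_R P_{\bullet,q}\big)$ is the homology of the tensor product of the complex $A_\ast$ with the $q$–th row of the resolution, and $d^1$ is induced by $1\otimes d^{v}$. Since $P_{\bullet,q}$ is a positive complex of projective modules whose cycles are projective, the Künneth theorem for $A_\ast\otimes_R P_{\bullet,q}$ yields a short exact sequence whose $\mathrm{Tor}_1$–correction vanishes, because $H_p(P_{\bullet\bullet},d^{h})_q$ is projective; this gives a natural isomorphism $E^1_{q,\sigma}\cong\bigoplus_{a+p=\sigma}H_a(A_\ast)\otimes_R H_p(P_{\bullet\bullet},d^{h})_q$, and naturality in $q$ identifies $d^1$ with $\bigoplus_{a+p=\sigma}\mathrm{id}_{H_a(A_\ast)}\otimes_R d^{v}$. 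Consequently
\[
E^2_{q,\sigma}=\bigoplus_{a+p=\sigma}H_q\!\big(H_a(A_\ast)\otimes_R H_p(P_{\bullet\bullet},d^{h})_\bullet\big)=\bigoplus_{a+p=\sigma}\mathrm{Tor}^R_q\!\big(H_a(A_\ast),H_p(C_\ast)\big),
\]
since $H_p(P_{\bullet\bullet},d^{h})_\bullet$ is a projective resolution of $H_p(C_\ast)$. Relabelling — the $\mathrm{Tor}$–degree $q$ is the ``$p$'' of the statement, $\sigma$ is its ``$q$'', $a$ is ``$s$'', $p$ is ``$t$'', and the total degree in the triple complex is $i+p+q=\sigma+q$ — this is exactly the stated $E^2$–page converging to $H_{p+q}(A_\ast\otimes_R C_\ast)$.

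The one genuinely delicate step is the identification of the $E^1$–page: one must verify both that the Künneth map for $A_\ast\otimes_R P_{\bullet,q}$ is an isomorphism with no residual $\mathrm{Tor}$–term and that, under that isomorphism, $d^1$ is literally $\mathrm{id}\otimes d^{v}$. This is where the full strength of a Cartan–Eilenberg resolution is needed — not merely projectivity of the $P_{p,q}$, but projectivity of the horizontal cycles and homologies, so that each row $P_{\bullet,q}$ splits as a contractible complex plus a complex with zero differential — combined with flatness of the terms of $A_\ast$; naturality of the Künneth isomorphism in the complex variable then pins down $d^1$. The remaining ingredients — existence of Cartan–Eilenberg resolutions of positive complexes, the two spectral sequences of a first–quadrant double complex and their convergence, and the acyclic–mapping–cone argument identifying the abutment — are standard homological algebra.
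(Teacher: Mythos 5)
This statement is quoted in the paper as a cited result (McCleary's K\"unneth spectral sequence); the paper itself gives no proof, so there is nothing internal to compare against. Your argument is the standard proof and is correct: you build the first-quadrant double complex from $A_\ast$ and a Cartan--Eilenberg resolution of $C_\ast$, identify the abutment with $H_\ast(A_\ast\otimes_R C_\ast)$ using flatness of the terms of $A_\ast$ (equivalently, one could let the other spectral sequence collapse via $\mathrm{Tor}_q^R(A_i,C_p)=0$ for $q>0$, which is the more common route in the textbooks), and identify $E^2$ using the splitting of each row coming from projectivity of the horizontal cycles, boundaries, and homology of the resolution, together with naturality of the K\"unneth map to pin down $d^1$. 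You correctly flag the one delicate point (the $E^1$ identification and $d^1$), and your treatment of it is adequate; the only cosmetic remark is that the flatness of $A_\ast$ is not actually needed for that step, only for the abutment.
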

For our purposes, $R= \Z[\Z \times \Gamma]$, $A_{\ast} = C_*(X,\Sigma \times \left\{0\right\};\Z[\Z \times \Gamma])$, and 
\[
C_{\ast}= \ldots \rightarrow 0 \rightarrow 0 \rightarrow \Z\left[\Z \times \frac{\Gamma}{[\Gamma,\Gamma]}\right] \rightarrow 0.
\]
Thus, $A_{\ast} \otimes_R C_{\ast} = C_*\left(X,\Sigma \times \left\{0\right\};\Z\left[\Z \times \frac{\Gamma}{[\Gamma,\Gamma]}\right]\right)$. Observe that 
\[
\text{E}_{p,q}^2=  \text{Tor}_p^{\Z[\Z \times \Gamma]}\left(H_q(X,\Sigma \times \left\{0\right\};\Z[\Z \times \Gamma]),\Z\left[\Z \times \frac{\Gamma}{[\Gamma,\Gamma]}\right]\right)
\]
since $H_i(C_{\ast}) \cong 0$ for $i \geq 1$. For $k$ large enough,
\[
H_1\left(X,\Sigma \times \left\{0\right\};\Z\left[\Z \times \frac{\Gamma}{[\Gamma,\Gamma]}\right]\right) \cong E_{1,0}^k \oplus E_{0,1}^k.
\]
The differential is given by $\text{d}^r: \text{E}_{p,q}^r \rightarrow E_{p-r,q+r-1}^r$ and
\[
E_{p,q}^{r+1}=\frac{\Ker d^r : E_{p,q}^r \rightarrow E_{p-r,q+r-1}^r}{\Im d^r : E_{p+q,q-r+1}^r \rightarrow E_{p,q}^r}.
\]
Since $H_0\left(X,\Sigma \times \left\{0\right\};\Z\left[\Z \times \Gamma\right]\right)\cong 0$, it follows that $E_{1,0}^k \cong 0$ for all $k \geq 2$. Note that $E_{0,1}^2=H_1(X,\Sigma \times \left\{0\right\};\Z[\Z \times \Gamma]) \otimes_{\Z[\Z \times \Gamma]} \Z\left[\Z \times \frac{\Gamma}{[\Gamma,\Gamma]}\right]$ and 
\[
\text{E}_{0,1}^3=\frac{\Ker (\text{E}_{0,1}^2 \rightarrow \text{E}_{-2,2}^2=0)}{\Im(\text{E}_{1,0}^2=0 \rightarrow \text{E}_{0,1}^2)}\cong \frac{E_{0,1}^2}{\left\{0\right\}}\cong E_{0,1}^2.
\]
Consequently, the sequence stabilizes. That is, for all $k \geq 2$, $E_{0,1}^k\cong E_{0,1}^2$. Therefore,
\[
H_1\left(X,\Sigma \times \left\{0\right\};\Z\left[\Z \times \frac{\Gamma}{[\Gamma,\Gamma]}\right]\right)  
\cong H_1(X,\Sigma \times \left\{0\right\};\Z[\Z \times \Gamma]) \otimes_{\Z[\Z \times \Gamma]} \Z\left[\Z \times \frac{\Gamma}{[\Gamma,\Gamma]}\right].
\]

\section{Concordance of Knots in a Thickened Surface}

Previous work \cite{stableequiv, Cobord} has defined (virtual) concordance of knots in a thickened surface. In this paper, we use a more restrictive notion of concordance. 
\begin{definition}
Let $K_0, K_1$ be knots in a thickened surface $\Sigma \times I$. We say that the two knots $K_0$ and $K_1$ are \textit{smoothly concordant} if there exists a smooth embedding $f:S^1 \times I \rightarrow \Sigma \times I \times I$ such that
\[
f\left(S^1 \times \left\{0\right\}\right)= K_0 \times \left\{0\right\} \subset \Sigma \times I \times \left\{0\right\}
\]
and
\[
f\left(S^1 \times \left\{1\right\}\right)= K_1 \times \left\{1\right\} \subset \Sigma \times I \times \left\{1\right\}
\]
That is, $K_0$ and $K_1$ cobound a smoothly embedded cylinder in $\Sigma \times I \times I$. If $K$ is concordant to the unknot, we call $K$ a slice knot. 
\end{definition}
The proof of the Fox-Milnor Theorem investigates the slice disk complement, as well as the knot complement. The proof of our result will follow a similar strategy, so we prove some preliminary results regarding the knot complement and the concordance complement. 

\subsection{The Knot Complement}

In order to work with the Alexander polynomial we define, we need to understand the homology of the knot complement. 
\begin{lemma}\label{homofcomp}
Let $K$ be a knot in a thickened surface $\Sigma \times I$, where $g=\text{genus}(\Sigma)$. Then,
\begin{displaymath}
   H_i\left(\Sigma \times I \backslash K\right) \cong \left\{
     \begin{array}{lr}
     	 \Z & : i=0\\
       \langle t \rangle \oplus \Z^{2g} & : i=1\\
       \Z^2 & : i=2\\
       0 & : i>2
     \end{array}
   \right.
\end{displaymath} 
where $t$ represents a meridian of the knot.
\end{lemma}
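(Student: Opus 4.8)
The plan is to compute the homology of $X = \Sigma \times I \setminus K$ by comparing it to $\Sigma \times I$ via a Mayer–Vietoris or long exact sequence argument, using the fact that a tubular neighborhood of $K$ is a solid torus $S^1 \times D^2$. First I would decompose $\Sigma \times I = X \cup N(K)$, where $N(K) \cong S^1 \times D^2$ is a closed tubular neighborhood of $K$ and $X$ is the complement of its interior (a deformation retract of $\Sigma \times I \setminus K$). The intersection $X \cap N(K)$ is the boundary torus $S^1 \times S^1$. Since $\Sigma \times I$ deformation retracts onto $\Sigma$, its homology is that of a genus-$g$ surface: $\Z, \Z^{2g}, \Z$ in degrees $0,1,2$ and $0$ above. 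The Mayer–Vietoris sequence then reads, in relevant degrees,
\[
\cdots \to H_i(T^2) \to H_i(X) \oplus H_i(S^1 \times D^2) \to H_i(\Sigma \times I) \to H_{i-1}(T^2) \to \cdots
\]
with $H_*(T^2) = \Z, \Z^2, \Z$ and $H_*(S^1 \times D^2) = H_*(S^1) = \Z, \Z, 0$.

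Next I would run the sequence degree by degree. In degree $0$ everything is connected, giving $H_0(X) \cong \Z$. The top end is easiest: since $X$ is an open $3$-manifold with nonempty boundary it has the homotopy type of a $2$-complex, so $H_i(X) = 0$ for $i \geq 3$, and the $i=3$ piece of Mayer–Vietoris forces the degree-$2$ part to be an exact sequence $0 \to H_2(X) \to H_2(\Sigma) \oplus 0 \to H_1(T^2) \to \cdots$. I would then need to identify the map $H_1(T^2) \to H_1(X) \oplus H_1(S^1)$ and the map $H_2(T^2) \to H_2(X) \oplus 0$. The meridian $\mu$ of the knot generates a $\Z$ in $H_1(T^2)$ that dies in $H_1(S^1 \times D^2)$ but survives as the class $t$ in $H_1(X)$; the longitude $\lambda$ maps to a generator of $H_1(S^1)$. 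A rank/exactness count then yields $H_1(X) \cong \langle t\rangle \oplus \Z^{2g}$ and $H_2(X) \cong \Z^2$, the extra $\Z$ in degree $2$ coming from the boundary torus class (equivalently from $H_2(\Sigma)$ together with the torus). To pin down that the sequences split and that there is no torsion beyond the stated form, I would invoke that $H_1(X) \cong \Z \times \Gamma/[\Gamma,\Gamma]$ is already recorded in \cite{LTS}, which identifies degree $1$ on the nose; the remaining degrees follow from the exact sequence once degree $1$ is known.

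The main obstacle I anticipate is correctly identifying the connecting maps — in particular verifying that the map $H_2(\Sigma\times I)\to H_1(T^2)$ in Mayer–Vietoris lands in the subgroup generated by the meridian (so that the longitude class, hence a full $\Z$, persists into $H_2(X)$), and checking that the resulting short exact sequences split so that no unexpected torsion appears. This is really a question of understanding how the fundamental class of $\Sigma$ meets a neighborhood of $K$; since $K$ is null-homologous in $\Sigma \times I$ (it bounds in $H_2$? — more carefully, one uses that $[K]=0$ in $H_1(\Sigma\times I)$ only if $K$ is homologically trivial, which need not hold, so instead I would argue directly with the longitude/meridian basis of $H_1(T^2)$ and track ranks). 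Once the boundary maps are understood, the computation is a routine diagram chase, and the splitting follows because all groups involved are finitely generated and the relevant quotients are free. I would present the argument compactly by writing out the Mayer–Vietoris sequence, tabulating the known groups, and reading off the four cases.
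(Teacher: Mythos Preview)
Your approach is essentially the paper's: it too invokes Mayer--Vietoris with $U=N(K)$, $V=\Sigma\times I\setminus\mathrm{Int}\,N(K)$, $U\cup V=\Sigma\times I$. The obstacle you flag about the connecting map and the splitting is exactly the point the paper singles out, and it is resolved by noting that $\Sigma\times\{0\}\subset X$ already represents the generator of $H_2(\Sigma\times I)$---so the boundary map $H_2(\Sigma\times I)\to H_1(T^2)$ vanishes---and that the inclusion $\Sigma\times\{0\}\hookrightarrow X$ furnishes a canonical splitting of the short exact sequence $0\to\Z\xrightarrow{[\mu_K]} H_1(X)\to H_1(\Sigma\times I)\to 0$.
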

\noindent This can be proven using the Mayer Vietoris exact sequence where $U=N(K)$ is a tubular neighborhood of $K$ in $\Sigma \times I$, $V= \Sigma \times I - \text{Int}(N(K))$, and $U \cup V = \Sigma \times I$. One important point in the proof of \ref{homofcomp} is that there is a canonical isomorphism $H_1(\Sigma \times I \backslash K) \cong \Z \oplus H_1(\Sigma \times I)$. From the Mayer Vietoris exact sequence, we have 
\[
H_1\left(\partial N(K)\right) \rightarrow H_1\left(N(K)\right) \oplus H_1\left(\Sigma \times I \backslash \text{int}N(K)\right) \rightarrow H_1\left(\Sigma \times I \right) \rightarrow H_{0}\left(\partial N(K)\right)
\]  
which gives rise to the short exact sequence
\[
0 \xrightarrow[]{} \Z \xrightarrow[]{n \mapsto n \cdot [\mu_K]} H_1(\Sigma \times I \backslash K) \xrightarrow[]{} H_1(\Sigma \times I)\xrightarrow[]{} \Z
\]
where $\mu_K$ is a meridian of the knot. Since $H_1(\Sigma \times I)$ is a free abelian group, this short exact sequence splits. Moreover, there is a canonical splitting given by the map
\[
H_1(\Sigma \times I) \xleftarrow[]{\cong} H_1(\Sigma \times \{0\}) \xrightarrow[]{} H_1(\Sigma \times I \backslash K)
\]
which means that there is a canonical isomorphism $H_1(\Sigma \times I \backslash K) \cong \Z \oplus H_1(\Sigma \times I)$. This will be of importance when we compare the Alexander polynomials of knots in a thickened surface. 

\noindent Given our definition of the Alexander module, we will also need to understand the homology of the pair $(\Sigma \times I \backslash K, \Sigma \times \left\{0\right\})$.
\begin{lemma}\label{homofcomppair}
Let $K$ be a knot in a thickened surface $\Sigma \times I$. Then,
\begin{displaymath}
   H_i(\Sigma \times I \backslash K, \Sigma \times \left\{0\right\}) = \left\{
     \begin{array}{lr}
     	 \Z & : i=1,2\\
       0 & : i \neq 1, 2
     \end{array}
   \right.
\end{displaymath} 
\end{lemma}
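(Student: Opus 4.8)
The plan is to read the answer off the long exact sequence of the pair $(X,\Sigma\times\{0\})$, where I abbreviate $X=\Sigma\times I\backslash K$, feeding in Lemma~\ref{homofcomp} together with the homology of the closed surface $\Sigma$,
\[
H_i(\Sigma\times\{0\})\cong\begin{cases}\Z & i=0,2\\ \Z^{2g} & i=1\\ 0 & i>2.\end{cases}
\]
Everything reduces to understanding the map $i_*\colon H_*(\Sigma\times\{0\})\to H_*(X)$ induced by inclusion. I would establish three facts about it: $i_*$ is an isomorphism in degree $0$ (both spaces are path-connected); $i_*$ is a split injection in degree $1$ with image exactly the canonical $H_1(\Sigma\times I)$-summand of $H_1(X)\cong\Z\oplus H_1(\Sigma\times I)$ singled out just before this lemma; and $i_*$ is a split injection in degree $2$.

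The uniform reason $i_*$ is split injective in degrees $1$ and $2$ is the presence of a retraction. The inclusion $\Sigma\times\{0\}\hookrightarrow X$ followed by $X\hookrightarrow\Sigma\times I$ lands in a space that deformation retracts onto $\Sigma\times\{0\}$, so there is a map $r\colon X\to\Sigma\times\{0\}$ with $r\circ(\mathrm{incl})\simeq\mathrm{id}_{\Sigma\times\{0\}}$, whence $r_*\circ i_*=\mathrm{id}$ on homology in every degree. Consequently $i_*$ is injective and its image is a direct summand of $H_*(X)$ in each degree. In particular $\mathrm{im}(i_*)\cong\Z$ is a summand of $H_2(X)\cong\Z^2$, so $H_2(X)/\mathrm{im}(i_*)$ is free of rank $1$; and $\mathrm{im}(i_*)\cong\Z^{2g}$ is a summand of $H_1(X)$ of corank $1$, so $H_1(X)/\mathrm{im}(i_*)$ is free of rank $1$.

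It then remains to chase the long exact sequence
\[
\cdots\to H_i(\Sigma\times\{0\})\xrightarrow{i_*}H_i(X)\to H_i(X,\Sigma\times\{0\})\to H_{i-1}(\Sigma\times\{0\})\xrightarrow{i_*}\cdots
\]
For $i\geq 4$ the two neighbouring groups vanish, so $H_i(X,\Sigma\times\{0\})=0$. For $i=3$, $H_3(X)=0$ and $H_3(X,\Sigma\times\{0\})\cong\ker(i_*\colon H_2(\Sigma)\to H_2(X))=0$ by injectivity. For $i=2$, injectivity of $i_*$ in degree $1$ kills the boundary map $H_2(X,\Sigma\times\{0\})\to H_1(\Sigma)$, so $H_2(X,\Sigma\times\{0\})\cong H_2(X)/\mathrm{im}(i_*)\cong\Z$. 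For $i=1$, $i_*$ is an isomorphism in degree $0$, so $H_1(X,\Sigma\times\{0\})\cong H_1(X)/\mathrm{im}(i_*)\cong\Z$. For $i=0$, $H_0(X,\Sigma\times\{0\})=\mathrm{coker}(i_*)=0$. This gives exactly the asserted groups. The one genuinely delicate point is degree $2$: injectivity of $i_*$ by itself only yields $H_2(X,\Sigma\times\{0\})\cong\Z^2/\Z$, which a priori could carry torsion, and it is precisely the retraction $r$ (i.e.\ the fact that $\Sigma\times\{0\}$ is a retract of $X$) that forces the image to be a summand and hence the quotient to be $\Z$.
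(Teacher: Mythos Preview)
Your proof is correct and follows the same approach the paper indicates: the paper merely states that the lemma ``can be proven by using the long exact sequence in homology of the pair,'' and you have carried this out in full. Your retraction argument (via $X\hookrightarrow\Sigma\times I\simeq\Sigma\times\{0\}$) is the natural extension to all degrees of the canonical splitting in $H_1$ that the paper records just above the lemma, and it cleanly handles the only nontrivial point, namely that the cokernels in degrees $1$ and $2$ are torsion-free.
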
 
\noindent This result can be proven by using the long exact sequence in homology of the pair. 

\subsection{The Concordance Complement} The proof of our main result will require us to understand the complement of the concordance. We will compute its homology and determine its boundary. Let $C$ denote the image of the concordance $f$ and let $N(C)$ be a neighborhood of the cylinder in $\Sigma \times I \times I$. 
\begin{lemma}
Let $K_0, K_1$ be concordant knots in a thickened surface $\Sigma \times I$ and let $C$ be the cylinder in $\Sigma \times I \times I$ cobounded by the knots. Then 
\begin{displaymath}
   H_i(\Sigma \times I \times I \backslash N(C)) = \left\{
     \begin{array}{lr}
     	 \Z & : i=0\\
       \Z \oplus \Z^{2g} & : i=1\\
       \Z^2 & : i=2\\
       0 & : i>2
     \end{array}
   \right.
\end{displaymath}
where $N(C)$ is a neighborhood of $C$ in $\Sigma \times I \times I$ and $g=$ genus($\Sigma$).
\end{lemma}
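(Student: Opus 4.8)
The plan is to compute the homology of the concordance complement $Y = \Sigma \times I \times I \setminus N(C)$ by relating it to the complement of the knot $K_0$ in a single slice $\Sigma \times I \times \{0\}$, exactly as one does in the classical Fox--Milnor setting. The key geometric input is that the cylinder $C$ is a smoothly embedded annulus meeting $\Sigma \times I \times \{0\}$ in $K_0$ and $\Sigma \times I \times \{1\}$ in $K_1$, so the pair $(\Sigma \times I \times I, C)$ is, up to homeomorphism, a product along the second $I$-factor of the pair $(\Sigma \times I, K_0)$ near the top and bottom, but of course not globally. What one really uses is a Mayer--Vietoris or excision argument: the complement $Y$ deformation retracts onto a space built from $X_0 = \Sigma \times I \setminus K_0$. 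Concretely, I would first argue that $\Sigma \times I \times I \setminus N(C)$ is homotopy equivalent to $(\Sigma \times I \setminus K_0) \times I$ in the relevant range, or at least that the inclusion $X_0 \times \{0\} \hookrightarrow Y$ induces isomorphisms on $H_i$ for all $i$. This would be the technical heart of the argument.

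The cleanest route is probably the following. Use the handle/Morse-theoretic structure of the concordance: a generic concordance cylinder in $\Sigma \times I \times I$ can be isotoped so that the projection to the last coordinate is Morse on $C$ with no critical points (since $C$ is an annulus cobounding two circles, it admits a handle decomposition with no handles, i.e. it is a product), and then $N(C) \cong N(K_0) \times I$ and hence the complement $Y \cong X_0 \times I$ rel boundary up to the corners. Granting this product structure, $H_i(Y) \cong H_i(X_0 \times I) \cong H_i(X_0)$, and Lemma~\ref{homofcomp} gives exactly the claimed groups $\Z, \langle t\rangle \oplus \Z^{2g}, \Z^2, 0$. The one subtlety is whether an arbitrary smooth concordance $f$ really yields a product complement; the resolution is that while $C$ need not be \emph{isotopic} to the trivial product cylinder in $\Sigma \times I \times I$, its \emph{tubular neighborhood complement} still has the homology of $X_0 \times I$ because $C$ is an annulus and the normal bundle data is trivial, so a Mayer--Vietoris computation with $U = N(C)$, $V = Y$, $U \cap V \simeq \partial N(C)$, $U \cup V = \Sigma \times I \times I$ forces the answer.

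I would therefore set up the Mayer--Vietoris sequence for $U = \mathrm{int}\,N(C)$ and $V = Y = \Sigma \times I \times I \setminus C$ (so $U \cup V = \Sigma \times I \times I$ and $U \cap V \simeq \partial N(C) \cong S^1 \times S^1 \times I \simeq T^2$, the boundary torus of the neighborhood). We have $H_*(U) \cong H_*(S^1) = H_*(C)$, $H_*(U\cup V) \cong H_*(\Sigma \times I) \cong H_*(\Sigma)$, and $H_*(U \cap V) \cong H_*(T^2)$. Plugging these in, the sequence reads, in each degree, as a short exact sequence whose outer terms are the known homologies of $T^2$, $S^1 \sqcup \Sigma$ pieces, and solving it degree by degree yields $H_0(Y) = \Z$, $H_1(Y) = \Z \oplus \Z^{2g}$ (the $\Z$ being generated by the meridian $\mu$ of $C$, the $\Z^{2g}$ coming from $H_1(\Sigma)$, with the splitting again canonical via $H_1(\Sigma \times \{0\} \times \{0\}) \to H_1(Y)$, mirroring the remark after Lemma~\ref{homofcomp}), $H_2(Y) = \Z^2$, and $H_i(Y) = 0$ for $i > 2$; one checks $H_3$ and $H_4$ vanish using that $\Sigma \times I \times I$ is a $4$-manifold with boundary and $C$ has codimension $2$.

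The main obstacle I anticipate is bookkeeping the Mayer--Vietoris connecting maps carefully enough to pin down the free ranks exactly — in particular, ensuring that the meridional $\Z$ in $H_1$ really does split off canonically and that no extra torsion or rank appears in $H_2$ from the interaction of the $T^2$ boundary with $H_1(\Sigma)$. This is handled by comparing with the knot-complement computation of Lemma~\ref{homofcomp}: the inclusion of a top or bottom slice $X_0 \hookrightarrow Y$ is compatible with both Mayer--Vietoris sequences, so naturality forces the same answer. An alternative, and perhaps shorter, write-up simply invokes the product structure $N(C) \cong K_0 \times D^2 \times I$ (valid because the annulus $C$ has trivial normal bundle in the orientable $4$-manifold $\Sigma \times I \times I$ and is itself a product), deduces $Y \simeq X_0 \times I \simeq X_0$, and quotes Lemma~\ref{homofcomp} directly.
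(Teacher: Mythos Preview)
Your main Mayer--Vietoris argument with $U = N(C)$, $V = Y$, $U \cap V \simeq T^2$, $U \cup V = \Sigma \times I \times I$ is exactly the route the paper takes, and the degree-by-degree bookkeeping you outline (including the canonical splitting of the meridional $\Z$ via $\Sigma \times \{0\} \times \{0\}$) is correct.

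One caution: the ``alternative, shorter'' write-up you propose at the end is not valid, and your earlier Morse-theoretic paragraph has the same flaw. Knowing that $C$ is abstractly a product $S^1 \times I$ with trivial normal bundle gives $N(C) \cong S^1 \times I \times D^2$, but it does \emph{not} give $Y \simeq X_0 \times I$. If the height function on $C$ could always be isotoped to have no critical points in $\Sigma \times I \times I$, every concordance would be an ambient isotopy, which is false already for classical knots in $S^3 \times I$. The complement $Y$ genuinely depends on the embedding of $C$, not just its abstract type and normal bundle; what is true (and what the Mayer--Vietoris argument shows) is that $H_*(Y)$ is forced, since the sequence only sees $H_*(N(C))$, $H_*(\partial N(C))$, and $H_*(\Sigma \times I \times I)$. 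So keep the Mayer--Vietoris computation as the actual proof and drop the product-complement shortcut.
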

\noindent This result can be proven using the Mayer Vietoris exact sequence, where $U = \overline{\Sigma \times I \times I \backslash N(C)}$, $V=N(C)$, and $\Sigma \times I \times I = U \cup V$. We also need to know the homology of the pair $(\Sigma \times I \times I \backslash N(C),\Sigma \times \left\{0\right\} \times \left\{0\right\})$.
\begin{lemma}
Let $K_0, K_1$ be concordant knots in a thickened surface $\Sigma \times I$ and let $C$ be the cylinder in $\Sigma \times I \times I$ cobounded by the knots. Then 
\begin{displaymath}
   H_i(\Sigma \times I \times I \backslash N(C), \Sigma \times \left\{0\right\} \times \left\{0\right\}) = \left\{
     \begin{array}{lr}
     	 \Z & : i=1,2\\
       0 & : i \neq 1, 2
     \end{array}
   \right.
\end{displaymath} 
where $N(C)$ is a neighborhood of $C$ in $\Sigma \times I \times I$ and $g=$ genus($\Sigma$).
\end{lemma}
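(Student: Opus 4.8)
The plan is to run the long exact sequence in homology of the pair $(W,\Sigma\times\{0\}\times\{0\})$, where I abbreviate $W=\Sigma\times I\times I\backslash N(C)$, feeding in the homology of $W$ computed in the preceding lemma together with the standard homology of a genus-$g$ surface: $H_0(\Sigma)\cong\Z$, $H_1(\Sigma)\cong\Z^{2g}$, $H_2(\Sigma)\cong\Z$, and $H_i(\Sigma)=0$ for $i>2$. Since $K_0$ lies in the interior of $\Sigma\times I\times\{0\}$, the surface $\Sigma\times\{0\}\times\{0\}$ is disjoint from $N(C)$, so it is genuinely a subspace of $W$; the only real work is to identify the map $i_*\colon H_*(\Sigma)\to H_*(W)$ induced by this inclusion.

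The key observation is that the inclusion $\Sigma\times\{0\}\times\{0\}\hookrightarrow\Sigma\times I\times I$ is a deformation retract, hence a homotopy equivalence, and that it factors as $\Sigma\times\{0\}\times\{0\}\hookrightarrow W\hookrightarrow\Sigma\times I\times I$. Therefore the composite $H_k(\Sigma)\xrightarrow{i_*}H_k(W)\xrightarrow{j_*}H_k(\Sigma\times I\times I)$ is an isomorphism for every $k$, so $i_*$ is a split injection in every degree and its image is a direct summand of the free abelian group $H_k(W)$. Consequently $\mathrm{coker}(i_*)$ is free abelian of rank $\mathrm{rk}\,H_k(W)-\mathrm{rk}\,H_k(\Sigma)$, which equals $0$ for $k=0$, equals $1$ for $k=1$ and $k=2$, and equals $0$ for $k>2$; in particular there is no torsion to worry about.

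Because $i_*$ is injective in every degree, every connecting homomorphism $H_k(W,\Sigma\times\{0\}\times\{0\})\to H_{k-1}(\Sigma)$ vanishes, so the long exact sequence breaks into isomorphisms $H_k(W,\Sigma\times\{0\}\times\{0\})\cong\mathrm{coker}\bigl(i_*\colon H_k(\Sigma)\to H_k(W)\bigr)$. Combining this with the rank count above gives $H_1\cong\Z$, $H_2\cong\Z$, and all other groups $0$, which is exactly the assertion of the lemma.

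I expect no serious obstacle here: once the preceding lemma is granted, the entire argument is a diagram chase, and the single point requiring care — the identification of $i_*$, including the fact that the relative $H_1$ and $H_2$ come out torsion-free — is handled cleanly by the homotopy-equivalence factorization. A minor convention to note is that $K_0$, and hence $C$ near the bottom, must avoid $\Sigma\times\{0\}$ so that $\Sigma\times\{0\}\times\{0\}$ lies in $W$ in the first place. As an alternative to the homotopy-equivalence argument one could extract $i_*$ from the Mayer--Vietoris sequence used to prove the preceding lemma, factoring $\Sigma\times\{0\}\times\{0\}\hookrightarrow W$ through the knot complement $X_0=\Sigma\times I\times\{0\}\backslash N(K_0)$ and invoking the canonical splitting $H_1(X_0)\cong\Z\oplus H_1(\Sigma\times I)$ recorded after Lemma~\ref{homofcomp}, but that route is longer.
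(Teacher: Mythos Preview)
Your argument is correct and follows exactly the route the paper indicates (``this result can be proven using the long exact sequence of the pair''), supplying the details the paper omits. The factorization of the inclusion through the homotopy equivalence $\Sigma\times\{0\}\times\{0\}\hookrightarrow\Sigma\times I\times I$ is precisely the right way to see that $i_*$ is a split injection in each degree, after which the relative groups drop out as the stated cokernels.
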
 
\noindent This result can be proven using the long exact sequence of the pair. 

The proof of the Fox-Milnor Theorem showed that the boundary of the slice disk complement is the 0-surgery of $S^3$ along the knot. Then, the Milnor Torsion of this pair was related to the Milnor Torsion of the knot complement. In particular, it was necessary to demonstrate the 0-surgery of $S^3$ along the knot $K$ did not change the Alexander module in such a way that the Alexander polynomial is changed. We will be following a similar strategy, so we will prove analogous results for knots in a thickened surface. 

First, we investigate the boundary of the concordance complement. Let $M=\Sigma \times I \times I \backslash N(C)$.  
Then,
\[
\partial M = \left(\partial\left(\Sigma \times I \times I\right)- \text{Int}\left(N(C) \cap \partial\left(\Sigma \times I \times I\right)\right)\right) \cup \left(\partial N(C) \cap \text{Int} \left(\Sigma \times I \times I\right)\right).
\]
A schematic of the concordance complemented is depicted in Figure \ref{concordcompsch}.

\begin{figure}[h]\begin{centering}
\includegraphics[scale=.15]{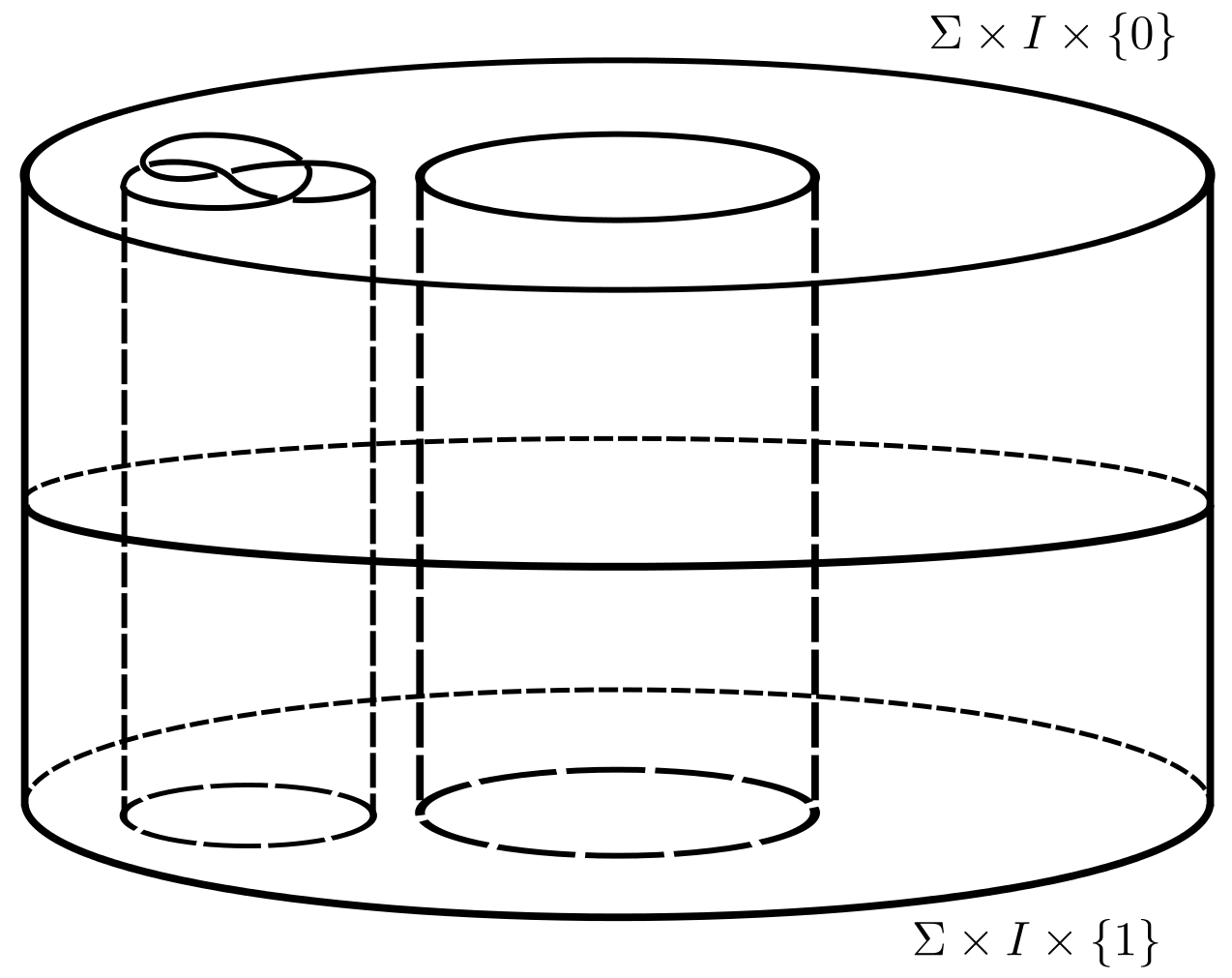}
\caption{Schematic of concordance complement}
\label{concordcompsch}
\end{centering}
\end{figure}
Write $\partial M = A \cup_h B$ where
\[
A=\partial\left(\Sigma \times I \times I\right)- \text{Int}\left(N(C) \cap \partial\left(\Sigma \times I \times I\right)\right)
\]
and
\[
B=\partial N(C) \cap \text{Int} \left(\Sigma \times I \times I\right)=S^1 \times I \times \partial I^2.
\]
Observe that,
\[
\partial \left(\Sigma \times I \times I\right)=\Sigma \times I \times \left\{0\right\} \sqcup\Sigma \times I \times \left\{1\right\} \sqcup \Sigma \times \left\{0\right\} \times I \sqcup \Sigma \times \left\{1\right\} \times I
\]
and
\[
\text{Int}\left(N(C) \cap \partial\left(\Sigma \times I \times I\right)\right)= N(K_0) \sqcup N(K_1)
\]
Therefore,
\[
A=X_0 \times \left\{0\right\} \sqcup X_1 \times \left\{1\right\} \sqcup \Sigma \times \left\{0\right\} \times I \sqcup \Sigma \times \left\{1\right\} \times I
\]
where $X_0 = \Sigma \times I \backslash N(K_0)$ and $X_1= \Sigma \times I \times \backslash N(K_1)$. The spaces $A$ and $B$ are glued together as follows. The longitude of $S^1 \times \left\{0\right\} \times \partial I^2$ is glued along the meridian of $\partial N(K_0)$ and the longitude of $S^1 \times \left\{1\right\} \times \partial I^2$ is glued along the meridian of $\partial N(K_1)$. Re-write $\partial M = M_0 \cup M_1$ where
\[
M_0= X_0 \times \left\{0\right\} \cup_h S^1 \times \left\{0\right\} \times \partial I^2
\]
and
\[
M_1= X_1 \times \left\{1\right\} \cup_h S^1 \times I \times \partial I^2 \cup \Sigma \times \left\{0\right\} \times I \cup \Sigma \times \left\{1\right\} \times I.
\]
The subspaces $X_0 \times \left\{0\right\} \cup_h S^1 \times \left\{0\right\} \times \partial I^2$ and $X_1 \times \left\{1\right\} \cup_h S^1 \times \left\{1\right\} \times \partial I^2$ are surgery on $\Sigma \times I$ along the knots $K_0$ and $K_1$. In order to have a coherent coefficient system for the triple $\left(M,M_0,X_0 \times \left\{0\right\}\right)$, we need to verify that the surgery of $\Sigma \times I$ along the knot has the same homology group as both the concordance complement and the knot complement. 
\begin{lemma}\label{surgsamehom}
Let $K$ be a knot in thickened surface $\Sigma \times I$ and let $X=\Sigma \times I \backslash N(K)$. Let
\[
X'=X\cup_h S^1 \times S^1
\]
where $h:S^1 \times S^1 \rightarrow \partial N(K)$ is the homeomorphism sending the longitude of $S^1 \times S^1$ to the meridian on $\partial N(K)$. Then, $H_i(X')\cong H_i(X)$ for all $i \geq 0$.
\end{lemma}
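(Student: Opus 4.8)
The plan is to compute $H_\ast(X')$ by a Mayer--Vietoris argument, as the paper does for the surrounding lemmas, and then to match the outcome against Lemma~\ref{homofcomp} degree by degree. Write $X'=U\cup V$, where $U$ is an open neighborhood of $X$ and $V$ is an open neighborhood of the piece attached along $\partial N(K)$; then $U\simeq X$, $V$ has the homotopy type of that piece, and $U\cap V$ deformation retracts onto the torus $T:=\partial N(K)\cong S^1\times S^1$, so $H_\ast(U\cap V)=(\Z,\Z^2,\Z,0,\dots)$. Because Lemma~\ref{homofcomp} supplies $H_\ast(X)=(\Z,\langle t\rangle\oplus\Z^{2g},\Z^2,0,\dots)$ and the attached piece has elementary homology, the whole sequence is pinned down once the two inclusion-induced maps $i_\ast\colon H_\ast(T)\to H_\ast(X)$ and $j_\ast\colon H_\ast(T)\to H_\ast(\text{attached piece})$ are identified.

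The map $j_\ast$ is read off from the hypothesis on $h$: since $h$ carries the longitude of $S^1\times S^1$ to a meridian $\mu_K$ of $\partial N(K)$, the behavior of $H_1(T)$ and of the class $[\mu_K]$ under $j_\ast$ is explicit. For $i_\ast$ I would use the structure already recorded after Lemma~\ref{homofcomp}. In degree $1$, the canonical splitting $H_1(X)\cong\Z\langle\mu_K\rangle\oplus H_1(\Sigma\times I)$ exhibits the meridian class as a free direct summand of $H_1(X)$; combined with the fact that a longitude of $\partial N(K)$ maps to the class of $K$ in $H_1(\Sigma\times I)$ (modulo a meridional term), this determines $i_\ast$ on $H_1(T)$. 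In degree $2$, the Mayer--Vietoris sequence for $\Sigma\times I=X\cup N(K)$ shows that $[\partial N(K)]$ generates a free direct summand of $H_2(X)\cong\Z^2$ --- a parallel copy $\Sigma\times\{\tfrac12\}$, which can be isotoped off $K$, gives the complementary summand --- so $i_\ast\colon H_2(T)\to H_2(X)$ is a split injection. Feeding this into the Mayer--Vietoris sequence, each connecting map is forced to vanish and $H_i(X')\cong H_i(X)$ in every degree: degrees $\geq 3$ are zero on both sides; in degree $2$ the class of $T$ contributed by the attached piece is already absorbed into $H_2(X)$ by $i_\ast$; and in degree $1$ the class coming from the attached piece is identified with $[\mu_K]$, hence is not new. (If one prefers to read the attachment as gluing a thickened torus $S^1\times S^1\times I$ along $S^1\times S^1\times\{0\}$, the inclusion $X\hookrightarrow X'$ is simply a deformation retraction and the statement is immediate; the Mayer--Vietoris version above is the one that also covers the solid-torus reading.)

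I expect the only real obstacle to be careful bookkeeping in degrees $1$ and $2$: locating, inside $H_1(X)\cong\Z\oplus H_1(\Sigma\times I)$ and $H_2(X)\cong\Z^2$, the $i_\ast$-images of a meridian, a longitude, and $[\partial N(K)]$, and then checking that the prescribed gluing forces the Mayer--Vietoris differentials out of $H_2(X')$ and $H_1(X')$ (and one degree up) to be zero. The conceptual point is that the condition ``the longitude of $S^1\times S^1$ is sent to a meridian of $\partial N(K)$'' guarantees that the attached piece only ever cancels against the meridional summand already present in $H_\ast(X)$, so the surgery changes neither the ambient homology here nor --- as in the classical Fox--Milnor argument --- the order of the Alexander module later on.
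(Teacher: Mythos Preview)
Your parenthetical remark is the one that actually matches the paper's statement and proof. As written, $X'=X\cup_h(S^1\times S^1)$ glues a \emph{closed} torus to $X$ along the boundary torus $\partial N(K)$ via a homeomorphism, so $X'\cong X$ and the lemma is immediate. The paper records this slightly more formally: it excises $\mathrm{Int}(X)$ from the pair $(X',X)$ to get $H_i(X',X)\cong H_i(Z,\partial N(K))$ with $Z=\partial N(K)\cup_h(S^1\times S^1)$, notes that the inclusion $\partial N(K)\hookrightarrow Z$ is a homeomorphism, and concludes $H_i(X',X)=0$. Your deformation-retraction observation is a perfectly good substitute for that.

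Your main argument, however, is for the solid-torus reading $X'=X\cup_h(S^1\times D^2)$, and for that reading the lemma is \emph{false} in general, so the Mayer--Vietoris computation cannot come out as you claim. The gap is in degree~$1$. Gluing in a solid torus does two things: it adds the generator of $H_1(S^1\times D^2)$, which, as you say, is identified with $[\mu_K]$ and hence is not new; but it also adds a \emph{relation}, because the curve on $T$ that $h$ matches with the meridian $\{pt\}\times\partial D^2$ of the solid torus now bounds a disk. Since $h$ sends longitude to meridian, that curve is $\lambda_K$, so $[\lambda_K]=0$ in $H_1(X')$. Under the canonical splitting $H_1(X)\cong\Z\langle\mu_K\rangle\oplus H_1(\Sigma)$ the class $[\lambda_K]$ projects to $[K]\in H_1(\Sigma)$, which is nonzero whenever $K$ is not null-homologous in $\Sigma\times I$; in that case $H_1(X')\not\cong H_1(X)$ (and one checks $H_2(X')\cong\Z\neq\Z^2\cong H_2(X)$ as well). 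For a concrete counterexample take $\Sigma=T^2$ and $K$ isotopic to $S^1\times\{pt\}\times\{\tfrac12\}$: then $H_1(X)\cong\Z^3$ but the solid-torus surgery yields $H_1(X')\cong\Z^2$. So drop the solid-torus paragraph and promote your parenthetical to the proof.
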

\begin{proof}
Note that 
\[
X-\text{Int}(X)=\Sigma\times \left\{0\right\} \sqcup \Sigma\times \left\{1\right\} \sqcup \partial N(K)
\]
and
\[
X'-\text{Int}(X)=\Sigma\times \left\{0\right\} \sqcup \Sigma\times \left\{1\right\} \sqcup Z
\]
where $Z=\partial N(K) \cup_h S^1 \times S^1$. By Excision,
\[
H_i(X',X) \cong H_i(\Sigma\times \left\{0\right\} \sqcup \Sigma\times \left\{1\right\} \sqcup Z,\Sigma\times \left\{0\right\} \sqcup \Sigma\times \left\{1\right\} \sqcup \partial N(K))
\]
In the long exact sequence of the pair
\[
\left(\Sigma\times \left\{0\right\} \sqcup \Sigma\times \left\{1\right\} \sqcup Z,\Sigma\times \left\{0\right\} \sqcup \Sigma\times \left\{1\right\} \sqcup \partial N(K)\right)
\]
the homomorphism 
\[
H_i\left(\Sigma\times \left\{0\right\} \sqcup \Sigma\times \left\{1\right\} \sqcup \partial N(K)\right) \rightarrow H_i\left(\Sigma\times \left\{0\right\} \sqcup \Sigma\times \left\{1\right\} \sqcup Z\right)
\]
is an isomorphism. Therefore,
\[
H_i(\Sigma\times \left\{0\right\} \sqcup \Sigma\times \left\{1\right\} \sqcup Z,\Sigma\times \left\{0\right\} \sqcup \Sigma\times \left\{1\right\} \sqcup \partial N(K)) \cong 0.
\]
and thus $H_i(X',X) \cong 0$. This implies that the homomorphism
\[
H_i(X) \rightarrow H_i(X')
\]
from the long exact sequence of the pair $(X',X)$ is an isomorphism.
\end{proof}

\section{A Fox-Milnor Theorem for Knots in a Thickened Surface} In this section, we will state and prove a Fox Milnor Theorem for knots in a thickened surface. First, we state our main result. 
\begin{theorem}\label{mainresult1}
Let $K_0, K_1$ be concordant knots in a thickened surface $\Sigma \times I$. Then
\[
\Delta\left(K_0\right)= \alpha \overline{\alpha} \Delta\left(K_1\right)
\]
where $\alpha$ is an element of the field of fractions of $\Z[\Z \oplus H_1(\Sigma)]$.
\end{theorem}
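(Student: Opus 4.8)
The strategy is to port the torsion-theoretic proof of the classical Fox--Milnor theorem to the larger ground ring $R=\Z[H_1(M)]=\Z[\Z\oplus H_1(\Sigma)]$, which is a Laurent polynomial ring, hence an integral domain; write $F$ for its field of fractions and $x\mapsto\overline{x}$ for the involution of $R$ (and of $F$) induced by $g\mapsto g^{-1}$. Since $\Delta(K_i)=\Delta_0\!\big(H_1(X_i,\Sigma\times\{0\};R)\big)$ is defined only up to a unit of $R$, all equalities below are equalities of associate classes, written $\doteq$; in particular, if $\Delta(K_0)=0$ one may take $\alpha=0$, so we may assume throughout that the Alexander modules in sight are $R$-torsion.

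First I would verify that the relevant relative chain complexes are $F$-acyclic, so that their Milnor torsions are defined. Using Lemma~\ref{homofcomppair}, the analogous statement for the concordance complement, and the torsion-module hypothesis, one checks that $C_*(X_i,\Sigma\times\{0\};R)$, $C_*(M_i,\Sigma\times\{0\}\times\{0\};R)$, and $C_*(M,\Sigma\times\{0\}\times\{0\};R)$ are $F$-acyclic; working relative to $\Sigma\times\{0\}$ is precisely what kills the $H_0$-contribution (the $(t-1)$-type factors that appear classically in the torsion of a surgered manifold). Milnor's theorem expressing the torsion as the alternating product of the orders of the homology modules (cf.\ \cite{Turaev}) then gives
\[
\tau(X_i,\Sigma\times\{0\};R)\ \doteq\ \Delta_0\!\big(H_1(X_i,\Sigma\times\{0\};R)\big)\big/\Delta_0\!\big(H_2(X_i,\Sigma\times\{0\};R)\big)\ \doteq\ \Delta(K_i),
\]
the last step using that $\Delta_0$ of the second relative homology is a unit. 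By Lemma~\ref{surgsamehom}, surgery on $\Sigma\times I$ along $K_i$ preserves homology, and $M_0$ is such a surgery while $M_1$ deformation retracts (collapsing the product collars $\Sigma\times\{j\}\times I$) onto one; hence $\tau(M_i,\Sigma\times\{0\}\times\{0\};R)\doteq\Delta(K_i)$.

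The heart of the argument is duality. The concordance complement $M$ is a compact oriented $4$-manifold with $\partial M=M_0\cup M_1$ and $M_0\cap M_1$ a union of two copies of $\Sigma$; set $\Sigma_0=\Sigma\times\{0\}\times\{0\}\subset M_0\cap M_1$. Capping with the fundamental class of $(M,\partial M)$ gives a chain homotopy equivalence $C_*(M,M_0;R)\simeq\overline{\Hom_R\!\big(C_{4-*}(M,M_1;R),R\big)}$, so the duality theorem for Milnor torsion yields $\tau(M,M_0;R)\doteq\overline{\tau(M,M_1;R)}^{\,-1}$, the exponent being $(-1)^{\dim M-1}$. Meanwhile, applying multiplicativity of torsion to the short exact sequences of relative chain complexes of the triples $(M,M_j,\Sigma_0)$ gives
\[
\tau(M,\Sigma_0;R)\ \doteq\ \tau(M_0,\Sigma_0;R)\,\tau(M,M_0;R)\ \doteq\ \tau(M_1,\Sigma_0;R)\,\tau(M,M_1;R).
\]
Cancelling $\tau(M,\Sigma_0;R)$ and inserting the duality relation,
\[
\frac{\tau(M_0,\Sigma_0;R)}{\tau(M_1,\Sigma_0;R)}\ \doteq\ \frac{\tau(M,M_1;R)}{\tau(M,M_0;R)}\ \doteq\ \tau(M,M_1;R)\,\overline{\tau(M,M_1;R)}.
\]
Substituting $\tau(M_i,\Sigma_0;R)\doteq\Delta(K_i)$ from the previous step and setting $\alpha=\tau(M,M_1;R)\in F^{\times}$ gives $\Delta(K_0)\doteq\alpha\overline{\alpha}\,\Delta(K_1)$, which is the theorem.

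I expect the duality step and its bookkeeping to be the main obstacle: making Poincaré--Lefschetz duality precise for the $4$-manifold-with-corners $M$ with $\partial M$ split as $M_0\cup M_1$; checking that the coefficient system $\pi_1\to H_1(M)$ restricts compatibly over each piece (this is exactly the purpose of Lemma~\ref{surgsamehom} and the homology computations of Section~3); pinning down the sign and exponent in the torsion duality theorem in dimension $4$ so that the right-hand side comes out in the symmetric form $\alpha\overline{\alpha}$ rather than $\alpha/\overline{\alpha}$; and confirming that no extraneous order ideals survive into the associate-equality, i.e.\ that $\Delta_0$ of the second homology of a complement pair, and the torsion contributed by the product collars inside $M_1$, are units. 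The remaining items---the $F$-acyclicity verifications and the multiplicativity computation---are routine once the homology lemmas of Section~3 are in hand.
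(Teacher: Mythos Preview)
Your proposal is correct and follows essentially the same route as the paper: identify $\Delta(K_i)$ with the Milnor torsion $\tau(X_i,\Sigma\times\{0\})$, pass to the surgered boundary pieces $M_i$ via the surgery lemma, apply multiplicativity of torsion to the triples $(M,M_j,\Sigma_0)$, and invoke the duality theorem $\tau(M,M_0)=\overline{\tau(M,M_1)}^{\,-1}$ in dimension~$4$ to produce the $\alpha\overline{\alpha}$ form. The paper differs only cosmetically---it shifts $\Sigma_0$ from level $0$ to level $1$ via homotopy invariance before using the second triple, verifies $\tau(M,M_j)\neq 0$ directly from $H_*(M,M_j)=0$ via \cite{cot} rather than by an $F$-acyclicity check, and takes $\alpha=\tau(M,M_0)^{-1}$ rather than your $\tau(M,M_1)$---and your worry about the duality exponent is resolved exactly as you compute.
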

\noindent We will prove this result by first showing that $\Delta(K)$ is the Alexander function of the pair $\left(X, \Sigma \times \{0\} \right)$. The Alexander function of the pair $\left(X, \Sigma \times \{0\} \right)$ is the same as the Milnor torsion of the pair $\left(X, \Sigma \times \{0\} \right)$ \cite{Turaev}, so we will then use results about Milnor torsion to prove our main result. 

\noindent The Alexander function of a pair $(X, Y)$ is an element of the field of fractions of $\Z[H_1(X)]$ defined as
\[
A(X, Y) = \prod_{i=0}^m \left[\Delta_0 \left(H_i(X, Y; \Z[H_1(X)])\right)\right]^{(-1)^{i+1}}.
\]
We demonstrate that $A\left(X, \Sigma \times \left\{0\right\}\right) = \Delta(K)$ where $K$ is a knot in a thickened surface and $X=\Sigma \times I - K$. First, note that 
\[
H_i\left(X,\Sigma \times \left\{0\right\};\Z\left[H_1(X)\right]\right)\cong 0
\]
for $i \neq 1,2$ since $X$ is connected and we can collapse all 3-cells to the boundary. Thus,
\[
A\left(X, \Sigma \times \left\{0\right\}\right) = \Delta_0 \left( H_1\left(X,\Sigma \times \left\{0\right\};\Z\left[H_1(X)\right]\right)\right) \cdot \Delta_0 \left( H_2\left(X,\Sigma \times \left\{0\right\};\Z\left[H_1(X)\right]\right)\right)^{-1}.
\]
The matrix representing the boundary map
\[
C_2\left(X,\Sigma \times \left\{0\right\};\Z\left[H_1(X)\right]\right) \stackrel{\partial_2}{\rightarrow} C_1\left(X,\Sigma \times \left\{0\right\};\Z\left[H_1(X)\right]\right)
\]
is a presentation matrix for $H_1\left(X,\Sigma \times \left\{0\right\};\Z\left[H_1(X)\right]\right)$. If $\partial_2$ is not injective, then $\text{rank } H_2\left(X,\Sigma \times \left\{0\right\};\Z\left[H_1(X)\right]\right) \neq 0$ and therefore $\Delta_0 \left( H_2\left(X,\Sigma \times \left\{0\right\};\Z\left[H_1(X)\right]\right)\right)=0$. Consequently, $A\left(X, \Sigma \times \left\{0\right\}\right)=0$. But,
\begin{align*}
\chi\left(X, \Sigma \times \left\{0\right\}\right)=0=\text{rank } H_2\left(X,\Sigma \times \left\{0\right\};\Z\left[H_1(X)\right]\right) - \text{rank } H_1\left(X,\Sigma \times \left\{0\right\};\Z\left[H_1(X)\right]\right).
\end{align*}
Thus,
\[
\text{rank } H_2\left(X,\Sigma \times \left\{0\right\};\Z\left[H_1(X)\right]\right) =\text{rank } H_1\left(X,\Sigma \times \left\{0\right\};\Z\left[H_1(X)\right]\right)
\]
Thus, in the case that $\text{rank } H_2\left(X,\Sigma \times \left\{0\right\};\Z\left[H_1(X)\right]\right) \neq 0$, it follows that 
\[
\text{rank } H_1\left(X,\Sigma \times \left\{0\right\};\Z\left[H_1(X)\right]\right) \neq 0.
\]
Therefore,
\[
A\left(X, \Sigma \times \left\{0\right\}\right)=0 = \Delta(K)
\]
If $\partial_2$ is injective, then
\[
H_2\left(X,\Sigma \times \left\{0\right\};\Z\left[H_1(X)\right]\right)\cong 0
\]
which implies that 
\[
\Delta_0 \left(H_2\left(X,\Sigma \times \left\{0\right\};\Z\left[H_1(X)\right]\right)\right)=1
\]
Hence,
\[
A\left(X, \Sigma \times \left\{0\right\}\right)=\Delta_0 \left(H_1\left(X,\Sigma \times \left\{0\right\};\Z\left[H_1(X)\right]\right)\right) = \Delta(K).
\] 

The proof of the classical Fox-Milnor Theorem uses results relating the Milnor torsion of the triple consisting of the slice disk complement, the 0-surgery of $S^3$ along the knot, and the knot complement. We will follow a similar strategy by first stating and proving some preliminary results which will relate the torsion of the spaces in the triple which includes the concordance complement, the surgery of $\Sigma \times I$ along the knot, and the knot complement. First we establish the following.

\begin{lemma} \label{zerosurgery} 
Let $K$ be a knot in thickened surface $\Sigma \times I$ and let $X=\Sigma \times I \backslash N(K)$. Let
\[
X'=X\cup_h S^1 \times S^1
\]
where $h:S^1 \times S^1 \rightarrow \partial N(K)$ is the homeomorphism sending the longitude of $S^1 \times S^1$ to the meridian on $\partial N(K)$. Then, $A(X, \Sigma \times \left\{0\right\}) = A(X', \Sigma \times \left\{0\right\})$. Note that this also implies that $\tau(X, \Sigma \times \left\{0\right\}) = \tau(X', \Sigma \times \left\{0\right\})$.
\end{lemma}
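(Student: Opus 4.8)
The plan is to transcribe the argument behind Lemma~\ref{surgsamehom} to homology with the local coefficient system $\Z[H_1(X)]$, and then read off the equality of Alexander functions (hence of torsions) straight from the definitions. The starting observation is that the inclusion $\iota\colon X \hookrightarrow X'$ induces an isomorphism $\iota_*\colon H_1(X) \xrightarrow{\ \cong\ } H_1(X')$ --- this is the $i=1$ case of Lemma~\ref{surgsamehom}. Since $\iota_*$ is an isomorphism, the rings $\Z[H_1(X)]$ and $\Z[H_1(X')]$ are canonically identified, and because $\iota_*$ is in particular injective, the pullback along $\iota$ of the maximal abelian cover of $X'$ is exactly the maximal abelian cover of $X$. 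Thus the coefficient systems used to compute $A(X,\Sigma\times\{0\})$ and $A(X',\Sigma\times\{0\})$ correspond, and throughout one may work over the single ring $\Z[H_1(X)]$.

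Next I would show that $H_i\big(X', X; \Z[H_1(X)]\big) \cong 0$ for all $i$, repeating the excision step in the proof of Lemma~\ref{surgsamehom} with twisted coefficients. Writing $Z = \partial N(K) \cup_h S^1 \times S^1$ as there, excision gives
\[
H_i\big(X', X; \Z[H_1(X)]\big) \cong H_i\big(\Sigma\times\{0\} \sqcup \Sigma\times\{1\} \sqcup Z,\ \Sigma\times\{0\} \sqcup \Sigma\times\{1\} \sqcup \partial N(K);\ \Z[H_1(X)]\big).
\]
On the $\Sigma\times\{0\}$ and $\Sigma\times\{1\}$ summands the pair is trivial, and on the remaining summand the inclusion $\partial N(K)\hookrightarrow Z$ is a homotopy equivalence (indeed a homeomorphism, since $Z$ is obtained from $\partial N(K)$ by gluing on a torus along a homeomorphism), so the long exact sequence of $(Z,\partial N(K))$ with local coefficients forces $H_i\big(Z,\partial N(K);\Z[H_1(X)]\big) \cong 0$. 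Hence $H_i\big(X', X; \Z[H_1(X)]\big) \cong 0$ for every $i$.

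Feeding this into the long exact sequence of the triple $\big(X', X, \Sigma\times\{0\}\big)$ with coefficients in $\Z[H_1(X)]$,
\[
\cdots \to H_i\big(X, \Sigma\times\{0\}; \Z[H_1(X)]\big) \to H_i\big(X', \Sigma\times\{0\}; \Z[H_1(X)]\big) \to H_i\big(X', X; \Z[H_1(X)]\big) \to \cdots,
\]
the vanishing of the third term shows that the inclusion induces an isomorphism $H_i\big(X, \Sigma\times\{0\}; \Z[H_1(X)]\big) \xrightarrow{\ \cong\ } H_i\big(X', \Sigma\times\{0\}; \Z[H_1(X')]\big)$ of modules for all $i$. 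Isomorphic modules have the same order, so $\Delta_0\big(H_i(X,\Sigma\times\{0\};\Z[H_1(X)])\big) = \Delta_0\big(H_i(X',\Sigma\times\{0\};\Z[H_1(X')])\big)$ for every $i$, and plugging this into
\[
A(X,\Sigma\times\{0\}) = \prod_{i} \Big[\Delta_0\big(H_i(X,\Sigma\times\{0\};\Z[H_1(X)])\big)\Big]^{(-1)^{i+1}}
\]
yields $A(X,\Sigma\times\{0\}) = A(X',\Sigma\times\{0\})$. Finally, since the Alexander function of a pair agrees with its Milnor torsion \cite{Turaev}, the equality $\tau(X,\Sigma\times\{0\}) = \tau(X',\Sigma\times\{0\})$ follows immediately.

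The step I expect to be the most delicate is not any single computation --- indeed, since $\partial N(K)\hookrightarrow Z$ is a homeomorphism one even has $X'\cong X$ rel $\Sigma\times\{0\}$, which makes the pair homology vanish on the nose --- but rather the bookkeeping around coefficient systems: one must be certain that the ``$\Z[H_1(X')]$-homology of $X'$'' is genuinely computed with the same ring and the same twisting as for $X$ (this is exactly what injectivity of $\iota_*$ buys), and that the twisted forms of excision and of the long exact sequence of a triple are being applied to pairs that are sufficiently nice (CW pairs, which these are). Everything else is a direct transcription of the proof of Lemma~\ref{surgsamehom}.
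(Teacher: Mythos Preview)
Your proof is correct and follows essentially the same route as the paper: the paper also invokes the excision argument of Lemma~\ref{surgsamehom} with coefficients in $\Z[H_1(X')]$ to obtain $H_i(X',X;\Z[H_1(X')])\cong 0$, then uses the long exact sequence of the triple $(X',X,\Sigma\times\{0\})$ to conclude that the relative homology modules, and hence their orders and the Alexander functions, agree. Your version is more explicit about why the coefficient systems on $X$ and $X'$ may be identified (via the $i=1$ case of Lemma~\ref{surgsamehom}), which the paper leaves implicit.
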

\begin{proof}
By the same excision argument as in Lemma \ref{surgsamehom}, except with coefficients in $\Z[H_1(X')]$, we have that $H_1(X',X; \Z[H_1(X')]) \cong 0$. This implies that the homomorphism 
\[
 H_i(X, \Sigma \times \left\{0\right\}; \Z[H_1(X)]) \stackrel{\cong}{\rightarrow} H_i(X', \Sigma \times \left\{0\right\}; \Z[H_1(X)])
\]
from the lonq exact sequence of the triple $(X',X, \Sigma \times \left\{0\right\})$ is an isomorphism for all $i \geq 0$. Consequently, 
\[
\Delta_0 H_i(X, \Sigma \times \left\{0\right\}; \Z[H_1(X')])= \Delta_0 H_i(X', \Sigma \times \left\{0\right\}; \Z[H_1(X')])
\]
for all $i\geq 0$. Therefore 
\[
A(X, \Sigma \times \left\{0\right\}) = A(X', \Sigma \times \left\{0\right\}).
\]
\end{proof}

We will need some results about Milnor torsion. The first provides a relation between the Milnor torsion of pairs of spaces that come from a triple. 
\begin{lemma}\cite{Turaev} \label{tortrip}
Suppose $Z \subset Y \subset X$ is a triple of topological spaces. Let $\phi: \Z[H_1(X)] \rightarrow R$ be a ring homomorphism and let $\psi: \Z[H_1(Y)] \rightarrow \Z[H_1(X)]$ be the homomorphism induced by inclusion. If $\tau^{\phi}(X,Y) \neq 0$ or $\tau^{\phi \circ \psi}(Y,Z) \neq 0$, then
\[
\tau^{\phi}(X,Z)=\tau^{\phi}(X,Y)\tau^{\phi \circ \psi}(Y,Z).
\]
\end{lemma}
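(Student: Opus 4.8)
The plan is to derive the formula from the multiplicativity of Milnor torsion along a short exact sequence of based chain complexes. Assume $R$ is a domain (as in all our applications) and let $Q = \mathrm{Frac}(R)$, so that each of $\tau^\phi(X,Y)$, $\tau^\phi(X,Z)$, $\tau^{\phi\circ\psi}(Y,Z)$ is an element of $Q$, well defined up to multiplication by $\pm H_1(X)$ and equal to $0$ precisely when the corresponding twisted chain complex fails to be acyclic over $Q$. After choosing CW structures on the triple so that $Z \subset Y \subset X$ is a filtration by subcomplexes (permissible up to simple homotopy equivalence, which does not affect torsion), the inclusions give a short exact sequence of complexes of free modules
\[
0 \rightarrow C_*(Y,Z) \rightarrow C_*(X,Z) \rightarrow C_*(X,Y) \rightarrow 0 .
\]
The coefficient systems match by construction: $C_*(X,Z)$ and $C_*(X,Y)$ carry $R$-coefficients through $\phi$, while the natural $\Z[H_1(Y)]$-coefficients on $C_*(Y,Z)$ are pushed to $R$ through $\phi\circ\psi$, and it is exactly the inclusion-induced map $\psi$ that makes the left-hand arrow a chain map over $R$. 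Each complex carries the preferred cellular basis (lifted to the relevant abelian cover), so the torsions are genuinely comparable in $Q$.

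Next I would invoke Milnor's multiplicativity lemma for torsion: for a short exact sequence $0 \to C' \to C \to C'' \to 0$ of finite based complexes over the field $Q$, the long exact homology sequence $\mathcal H$ is itself an acyclic based complex, and whenever two of $C', C, C''$ are acyclic so is the third, with $\tau(C) = \pm\, \tau(C')\,\tau(C'')\,\tau(\mathcal H)$. Applying this with $C' = C_*(Y,Z)$, $C = C_*(X,Z)$, $C'' = C_*(X,Y)$: if both $\tau^\phi(X,Y)\neq 0$ and $\tau^{\phi\circ\psi}(Y,Z)\neq 0$, then $C'$ and $C''$ are acyclic, hence $C$ is acyclic, $\mathcal H$ is the zero complex with $\tau(\mathcal H)=1$, and the formula specializes to $\tau^\phi(X,Z) = \tau^\phi(X,Y)\,\tau^{\phi\circ\psi}(Y,Z)$, the sign and $H_1(X)$-ambiguity being absorbed into the standing indeterminacy of Milnor torsion.

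It then remains to treat the case where exactly one of the two hypothesized torsions vanishes, say $\tau^{\phi\circ\psi}(Y,Z)\neq 0$ but $\tau^\phi(X,Y)=0$ (the other case being symmetric). Here $C'$ is acyclic while $C''$ is not, so the long exact homology sequence forces $H_n(C)\cong H_n(C'')$ for every $n$; hence $C$ is not acyclic and $\tau^\phi(X,Z)=0$, while the right-hand side equals $0\cdot\tau^{\phi\circ\psi}(Y,Z)=0$, so the identity holds with both sides zero. Assembling the three cases proves the lemma. (An equivalent route, closer to the Alexander-function formulation, is to feed the long exact homology sequence in $R$-coefficients into the multiplicativity of the order $\Delta_0$ over the UFD $R$ and let the alternating products telescope; this gives the same conclusion.)

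I expect the main difficulty to be bookkeeping rather than any single deep step: one must arrange the CW structures compatibly on the triple (or argue via simple-homotopy invariance), verify that the short exact sequence is one of \emph{based} complexes with bases inherited correctly so that $\tau(\mathcal H)$ really drops out, and track the coefficient change $\psi$ consistently across all three complexes. The acyclicity dichotomy in the last paragraph is precisely what makes the ``or'' in the hypothesis necessary, and handling that case split cleanly is the only genuinely delicate point.
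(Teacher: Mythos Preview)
The paper does not actually prove this lemma; it is stated with a citation to \cite{Turaev} and used as a black box. Your argument via the short exact sequence $0 \to C_*(Y,Z) \to C_*(X,Z) \to C_*(X,Y) \to 0$ and Milnor's multiplicativity formula $\tau(C) = \pm\,\tau(C')\,\tau(C'')\,\tau(\mathcal H)$ is exactly the standard proof (and is essentially how Turaev establishes it), including the case split that handles the ``or'' in the hypothesis; so your proposal is correct and matches the intended argument behind the citation.
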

The next result is a duality theorem for Milnor torsion. 
\begin{theorem} \label{tdual}
Let $W$ be an $n$-manifold such that $\partial W = M_1 \cup M_2$ and $M_1 \cap M_2 = \partial M_1 = \partial M_2$. Then,
\[
\tau(W, M_1) = \overline{\tau(W,M_2)}^{(-1)^{n+1}}.
\]  
\end{theorem}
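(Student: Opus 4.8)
The plan is to obtain this from Poincar\'e--Lefschetz duality carried out at the chain level over the group ring, in the spirit of Milnor's duality theorem for Reidemeister torsion. Write $\Lambda = \Z[H_1(W)]$, equipped with the involution $\overline{g} = g^{-1}$ for $g \in H_1(W)$, and let $Q$ be its field of fractions with the induced involution, still denoted by a bar. Let $\widehat{W} \to W$ be the maximal abelian cover and $\widehat{M_i}$ the preimage of $M_i$. I would first pick a finite CW structure on the (compact) manifold $W$ in which $M_1$, $M_2$ and $M_1 \cap M_2$ are subcomplexes; choosing lifts of the cells makes each $C_*(\widehat{W}, \widehat{M_i})$ a based complex of free $\Lambda$-modules, the bases being well defined up to the factor $\pm H_1(W)$ that the torsion already ignores. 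By definition $\tau(W, M_i)$ is the torsion of $Q \otimes_\Lambda C_*(\widehat{W},\widehat{M_i})$; if this complex fails to be acyclic then $\tau(W,M_i)=0$, and since the hypothesis guarantees that one of $\tau(W,M_1)$, $\tau(W,M_2)$ is nonzero, the duality below will force both to be nonzero, so I may assume acyclicity throughout.

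The core step is the chain-level duality. Capping with a fundamental class $[W] \in H_n(W, \partial W; \Z)$ --- orientability being implicit in the statement, otherwise one twists by the orientation character --- yields a chain homotopy equivalence
\[
C_*\big(\widehat{W}, \widehat{M_1}\big) \;\simeq\; \overline{C^{\,n-*}\big(\widehat{W}, \widehat{M_2}\big)}
\]
between based complexes of free $\Lambda$-modules, where $C^{\,n-*}$ is the reindexed $\Lambda$-linear dual cochain complex and the bar signals that the module structure is pulled back along the involution. I would then need to verify that this equivalence is \emph{simple}, i.e.\ introduces no extra torsion: this is where one either passes to the dual cell decomposition, so that the Poincar\'e--Lefschetz map is realized on geometric bases, or invokes the simple-homotopy invariance of Milnor torsion to absorb the discrepancy into the allowed indeterminacy.

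It then remains to compare the torsion of a based acyclic complex with that of its dual. For such a complex $D_*$ over the field $Q$ one has $\tau\big(\overline{D^{\,n-*}}\big) = \overline{\tau(D_*)}^{\,(-1)^{n+1}}$: the bar comes directly from the involution on $Q$, while the exponent $(-1)^{n+1}$ bundles together the reversal of the grading and the standard sign that appears when one dualizes and re-bases an acyclic complex; this I would check on a two-term complex and propagate by multiplicativity of torsion under short exact sequences of complexes. Stringing the pieces together and using invariance of $\tau$ under simple equivalence then gives
\[
\tau(W, M_1) = \tau\big(C_*(\widehat{W},\widehat{M_1})\big) = \tau\big(\overline{C^{\,n-*}(\widehat{W},\widehat{M_2})}\big) = \overline{\tau\big(C_*(\widehat{W},\widehat{M_2})\big)}^{\,(-1)^{n+1}} = \overline{\tau(W,M_2)}^{\,(-1)^{n+1}}.
\]

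The main obstacle is bookkeeping rather than any single deep input. The genuinely delicate points are (a) showing that the Poincar\'e--Lefschetz chain equivalence can be taken simple, so that no spurious unit is introduced --- which forces care about exactly which cell structures and lifts are used --- and (b) pinning down the exponent $(-1)^{n+1}$ together with the precise indeterminacy, since Milnor torsion is only defined modulo $\pm\,\overline{H_1(W)}$ and one must confirm the conjugation is compatible with this ambiguity (any leftover overall sign is absorbed by it). Once these sign-and-basis issues are settled, the identity is a formal consequence of the multiplicativity and duality properties of torsion.
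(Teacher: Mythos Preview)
Your outline is correct and is exactly the standard Poincar\'e--Lefschetz duality argument for Reidemeister/Milnor torsion. The paper does not actually give a proof of this theorem; it only remarks that ``the proof of this result is very similar to the proofs of other duality results'' and cites \cite{TPD, TTI}, whose arguments proceed precisely as you describe: chain-level cap product with the fundamental class, passage to the dual cell structure to make the duality map a based isomorphism (hence simple), and the algebraic identity $\tau(\overline{D^{\,n-*}}) = \overline{\tau(D_*)}^{(-1)^{n+1}}$ for acyclic based complexes over a field with involution.

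One small slip: you write that ``the hypothesis guarantees that one of $\tau(W,M_1)$, $\tau(W,M_2)$ is nonzero,'' but the theorem statement carries no such hypothesis. The correct handling of the degenerate case is that Poincar\'e--Lefschetz duality itself gives $H_*(W,M_1;Q) \cong H^{n-*}(W,M_2;Q)$, so one side is acyclic over $Q$ if and only if the other is; hence either both torsions vanish (and the identity is read with the usual convention) or both are nonzero and your argument goes through. With that adjustment, your proposal is complete.
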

\noindent Note that we follow the convention set in \cite{Turaev}; we let $\tau(X,Y)$ denote $\tau^{\varphi}(X,Y)$ where $\varphi: \Z[H_1(X)] \rightarrow \Z[G]$ and $G = H_1(X)/\text{Tors}H_1(X)$. The proof of this result is very similar to the proofs of other duality results (see \cite{TPD, TTI}).

\noindent We are now ready to prove the main result. Suppose $K_0$ and $K_1$ are concordant knots and let $X_0 = \Sigma \times I - N(K_0)$ and $X_1 = \Sigma \times I - N(K_1)$. Let $M= \Sigma \times I \times I - N(C)$ be the complement of the concordance between $K_0$ and $K_1$. Write $\partial M = M_0 \cup M_1$ as in the previous section. Then, by \ref{zerosurgery},
\[
\Delta(K_0) = A\left(X_0 \times \left\{0\right\}, \Sigma \times \left\{0\right\} \times \left\{0\right\}\right) =A\left(M_0, \Sigma \times \left\{0\right\} \times \left\{0\right\}\right)
\]
and thus
\[
\Delta(K_0) = \tau\left(M_0, \Sigma \times \left\{0\right\} \times \left\{0\right\}\right).
\]
We will apply \ref{tortrip} to the triple
\[
\left(M,M_0,\Sigma \times \left\{0\right\} \times \left\{0\right\}\right),
\]
so first we will verify that $\tau(M,M_0) \neq 0$. By \cite{cot}, it suffices to show the following. Consider a pair of CW-complexes $(X,Y)$. Let $\varphi: \pi_1(X) \rightarrow H$ where $H$ is a free abelian group. If $H_{\ast}(X,Y) =0$, then $H_{\ast}(X,Y;\Q(H)) =0$, where $\Q(H)$ is the quotient field of $\Z[H]$. This implies that $\tau^{\varphi}(X,Y) \neq 0$. Thus, we will establish that $H_{\ast}(M,M_0)=0$. Consider the long exact sequence in homology of the pair $(M,M_0)$. Since $H_{\ast}(M_0) \xrightarrow[]{\cong} H_{\ast}(M)$ is an isomorphism, it follows that $H_{\ast}(M,M_0)= 0$. Applying \ref{tortrip} to the triple 
\[
\left(M,M_0,\Sigma \times \left\{0\right\} \times \left\{0\right\}\right)
\]
we have,
\[
\Delta(K_0) = \tau\left(M_0, \Sigma \times \left\{0\right\} \times \left\{0\right\}\right)=\tau\left(M,M_0\right)^{-1} \tau\left(M,\Sigma \times \left\{0\right\} \times \left\{0\right\}\right).
\] 
Since Milnor torsion is invariant under homotopy equivalence, 
\[
\tau\left(M,\Sigma \times \left\{0\right\} \times \left\{0\right\}\right)=\tau\left(M,\Sigma \times \left\{0\right\} \times \left\{I\right\}\right)=\tau\left(M,\Sigma \times \left\{0\right\} \times \left\{1\right\}\right)
\]
and consequently
\[
\Delta(K_0) =\tau\left(M,M_0\right)^{-1} \tau\left(M,\Sigma \times \left\{0\right\} \times \left\{1\right\}\right).
\] 
The same argument as above shows that we can apply \ref{tortrip} to the triple 
\[
\left(M,M_1,\Sigma \times \left\{0\right\} \times \left\{1\right\}\right).
\]
Therefore, we have
\[
\Delta(K_0) =\tau\left(M,M_0\right)^{-1} \tau\left(M,\Sigma \times \left\{0\right\} \times \left\{1\right\}\right)=\tau\left(M,M_0\right)^{-1} \tau\left(M,M_1\right) \tau\left(M_1,\Sigma \times \left\{0\right\} \times \left\{1\right\}\right).
\] 
By \ref{tdual},
\begin{align*}
\Delta(K_0) &=\tau\left(M,M_0\right)^{-1} \tau\left(M,M_1\right) \tau\left(M_1,\Sigma \times \left\{0\right\} \times \left\{1\right\}\right) \\ &=\tau\left(M,M_0\right)^{-1} \overline{\tau\left(M,M_0\right)^{-1}} \tau\left(M_1,\Sigma \times \left\{0\right\} \times \left\{1\right\}\right) .
\end{align*}
Note that
\begin{align*}
M_1 &= X_1 \times \left\{1\right\} \cup_h S^1 \times I \times \partial I^2 \cup \Sigma \times \left\{0\right\} \times I \cup \Sigma \times \left\{1\right\} \times I\\
&\simeq  X_1 \times \left\{1\right\} \cup_h S^1 \times I \times \partial I^2 \cup \Sigma \times \left\{0\right\} \times \left\{1\right\} \cup \Sigma \times \left\{1\right\} \times \left\{1\right\}\\
&= X_1 \times \left\{1\right\} \cup_h S^1 \times I \times \partial I^2\\
&= X_1' \times \left\{1\right\}  
\end{align*}
where $X_1'$ is the surgery of $\Sigma \times I$ along the knot $K_1$. Thus,
\begin{align*}
\Delta(K_0) &=\tau\left(M,M_0\right)^{-1} \overline{\tau\left(M,M_0\right)^{-1}} \tau\left(M_1,\Sigma \times \left\{0\right\} \times \left\{1\right\}\right)\\  & =\tau\left(M,M_0\right)^{-1} \overline{\tau\left(M,M_0\right)^{-1}} \tau\left(X_1' \times \left\{1\right\},\Sigma \times \left\{0\right\} \times \left\{1\right\}\right).
\end{align*}
By \ref{zerosurgery},
\begin{align*}
\Delta(K_0) & =\tau\left(M,M_0\right)^{-1} \overline{\tau\left(M,M_0\right)^{-1}} \tau\left(X_1' \times \left\{1\right\},\Sigma \times \left\{0\right\} \times \left\{1\right\}\right)\\   & =\tau\left(M,M_0\right)^{-1} \overline{\tau\left(M,M_0\right)^{-1}} \tau\left(X_1 \times \left\{1\right\} ,\Sigma \times \left\{0\right\} \times \left\{1\right\}\right).
\end{align*}
Lastly,
\begin{align*}
\Delta(K_0) & =\tau\left(M,M_0\right)^{-1} \overline{\tau\left(M,M_0\right)^{-1}} \tau\left(X_1 \times \left\{1\right\} ,\Sigma \times \left\{0\right\} \times \left\{1\right\}\right)\\  & =\alpha \overline{\alpha} A\left(X_1,\Sigma \times \left\{0\right\}\right)\\ & = \alpha \overline{\alpha} \Delta(K_1)
\end{align*}
where $\alpha = \tau\left(M,M_0\right)^{-1}$.

\bibliographystyle{amsplain}

\begin{thebibliography}{30}
\bibitem{Blanchfield}
R. C. Blanchfield. \textit{Intersection Theory of Manifolds with Operators with Applications to Knot Theory}. The Annals of Mathematics, Second Series, Vol. 65 No. 2 (1957) 340-356.
\bibitem{stableequiv}
J.S. Carter, S. Kamada, M. Saito. \textit{Stable Equivalence of Knots on Surfaces and Virtual Knot Cobordisms}. J. Knot Theory Ramifications 11 (2002).
\bibitem{LTS}
S. Carter, D. Silver, S. Williams. \textit{Invariants of links in thickened surfaces}. Algebraic and Geometric Topology 14 (2014) 1377-1394.
\bibitem{TTI}
J. C. Cha, S. Friedl. \textit{Twisted torsion invariants and link concordance}. Forum Mathematicum 25(3) (2010).
\bibitem{cot}
T. Cochran, K. Orr, P. Teichner. \textit{Knot concordance, Whitne towers, and $L^2$-signatures}. Annals of Mathematics 157 (2003) 433-519.
\bibitem{Crow}
R.H. Crowell, D. Strauss. \textit{On the elementary ideals of link modules}. Trans. Amer. Math. Soc. 142 (1969) 93-109.
\bibitem{Sing}
R.H. Fox, J.W. Milnor. \textit{Singularities of 2-spheres in 4-space and cobordisms of knots}. Osaka J. Math. (1966) 257-267.
\bibitem{TPD}
S. Friedl, T. Kim, T. Kitayama. \textit{Poincare duality and degrees of twisted Alexander polynomials} Indiana University Mathematics Journal 61(1) (2011).
\bibitem{VKT}
L.H. Kauffman. \textit{Virtual Knot Theory}. European J. Combin. 20 (1999), No. 7, 663-690.
\bibitem{Kuperberg}
G. Kuperberg. \textit{What is a virtual link?}. Algebraic and Geometric Topology, Vol. 3 (2003) 587-591.
\bibitem{Concordancebook}
C. Livingston, S. Naik. \textit{Introduction to Knot Concordance (Work in Progress)}. [http://www.wolfweb.unr.edu].
\bibitem{mccleary}
J. McCleary. \textit{A User's Guide to Spectral Sequences}. Cambridge University Press. (2001)
\bibitem{Mildual}
J. Milnor. \textit{A Duality Theorem for Reidemeister Torsion}. The Annals of Mathematics 2nd Ser., Vol. 76 No. 1 (1962) 137 -147.
\bibitem{infinitecycliccoverings}
J. Milnor. \textit{Infinite Cyclic Coverings}. Conference of the Topology of Manifolds. (1968) 115-133.
\bibitem{whiteheadtors}
J. Milnor. \textit{Whitehead Torsion}. Bull. AMS 72 (1966) 358-426.
\bibitem{rtor}
K. Reidemeister. \textit{Homotopieringe und Linsenr{\"a}ume}. Hamburger Abhandl \textbf{11} (1935) 102-109.  
\bibitem{R}
K. Reidemeister. \textit{Knotentheorie}. Ergebnisse der Mathematik und ihrer Grenzgebiete. Berlin: Springer. (1932)
\bibitem{Rolfsen}
D. Rolfsen. \textit{Knots and Links}. The American Mathematical Society. Reprinted 2003.
\bibitem{privcomm}
D. Silver, S. Williams, private communication.
\bibitem{ste}
B. Stenstr{\"o}m. \textit{Ring of Quotients: An Introduction to Methods of Ring Theory}. Springer-Verlag. (1975).
\bibitem{Strebel}
R. Strebel. \textit{Homological methods applied to the derived series of groups}. Comment. Math. 49, 1974, 302-332.
\bibitem{Cobord}
V. Turaev. \textit{Cobordism of knots on surfaces.} Journal of Topology 1 (2008) 285-305. London Mathematical Society.
\bibitem{Turaev} 
V. Turaev. \textit{Reidemeister Torsion in Knot Theory}. Russian Math Surveys 41 (1986) 119-182.
\bibitem{whitehead}
J.H.C. Whitehead. \textit{Simple homotopy types}. Amer. J. Math. \textbf{72} 1-57. 



\end{thebibliography}

\end{document}